\documentclass[11pt,a4paper,twoside]{article}
\usepackage{bm, amsmath, amssymb, amsthm} 

\topmargin=-12 true mm
\oddsidemargin=-2 true mm
\evensidemargin=-2 true mm
\setlength{\textheight}{247 true mm}  
\setlength{\textwidth}{164 true mm}

\def\RR{\mathbb{R}}
\newcommand\tr{\operatorname{trace}}
\newcommand\Div{\operatorname{div}}

\def\Ric{\operatorname{Ric}}
\def\vol{\operatorname{vol}}


\newtheorem{corollary}{Corollary}
\newtheorem{definition}{Definition}
\newtheorem{example}{Example}
\newtheorem{remark}{Remark}

\newtheorem{proposition}{Proposition}
\newtheorem{theorem}{Theorem}

\author{Vladimir Rovenski\footnote{Department of Mathematics, University of Haifa, Mount Carmel, 31905 Haifa,  Israel
       \newline e-mail: {\tt vrovenski@univ.haifa.ac.il}
       }
        \ and \
        Pawe\l\ Walczak\footnote{Katedra Geometrii,
        Uniwersytet \L\'{o}dzki, ul. Banacha 22,
             90-238  \L\'{o}d\'{z}, Poland
        \newline e-mail: {\tt pawel.walczak@wmii.uni.lodz.pl}}
}

\title{On isometric immersions of almost $k$-product manifolds}

\begin{document}

\date{}

\maketitle


\begin{abstract}
A Riemannian manifold endowed with $k\ge2$ complementary pairwise orthogonal distributions
is called a Riemannian almost $k$-product manifold.
In the article, we study the following problem: {find a relationship between intrinsic and extrinsic invariants of a
Riemannian almost $k$-product manifold isometrically immersed in another Riemannian manifold}.
For such immersions, we establish an inequality that includes the mixed scalar curvature and the square of the mean curvature.
 Although Riemannian curvature tensor belongs to intrinsic geometry, a special part called the mixed curvature is also related to the extrinsic geometry of a Riemannian almost $k$-product manifold.
 Our inequality also contains mixed scalar curvature type invariants related to B.-Y~Chen's $\delta$-invariants.
Applications are given for isometric immersions of multiply twisted and warped products
(we improve some known
inequalities by replacing the sectional curvature with our invariant)
and to problems of non-immersion and non-existence of compact leaves of foliated submanifolds.

\vskip1.5mm\noindent
\textbf{Keywords}:
almost product manifold,
distribution,
mixed scalar curvature,
isometric immersion,
mean curvature vector,
multiply twisted product

\vskip1.5mm
\noindent
\textbf{Mathematics Subject Classifications (2010)}
53C12; 53C15; 53C42

\end{abstract}

\section{Introduction}

A fundamental problem (based on the famous embedding theorem of J. F.~Nash, \cite{na-1}) is to find a simple connection between the  intrinsic
and extrinsic invariants of a submanifold.
 The problem is particularly interesting
 if we assume additional structures, for example, web of foliations, almost $k$-product structure or multiply warped product structure, on a Riemannian submanifold.

Multiply warped products have been studied from extrinsic geometry point of view
in a series of articles, see \cite{chen-b,chen-b-v} for an~overview.
B.Y.~Chen and F.~Dillen \cite{chen-d} proved an optimal inequality
(which involves the Laplacian of the warping function and the squared mean curvature)
for warped product manifolds isometrically immersed in arbitrary Riemannian manifolds,
in particular, in Riemannian spaces of constant sectional curvature.
Corresponding inequalities for isometrically immersed doubly warped product manifolds have been obtained by A.~Olteanu in~\cite{ol-1}.
A~sufficient condition for an isometric immersion of a warped product in a Euclidean space to split into a product immersion
was given by S.\,N\"{o}lker \cite{no}.

In this article, we consider a Riemannian manifold $(M,g)$ endowed with an \textit{almost $k$-product structure}
(more general than a multiply warped product), i.e.,
$k\ge2$ complementary pairwise orthogonal distributions ${\cal D}_1,\ldots,{\cal D}_k$ (subbundles of the tangent bundle $TM$),
e.g., \cite{rov-IF-k}.
Such $(M,g;{\cal D}_1,\ldots,{\cal D}_k)$ is called a {Riemannian almost $k$-product manifold}.
Its important example is a multiply twisted or warped product~manifold.
 One can ask when $(M,g,{\cal D}_1,\ldots,{\cal D}_k)$ splits
into the product of $k$ manifolds.
The best known answer to this question is the de Rham's Decomposition theorem, for example, \cite{MRS-99}:
if each
${\cal D}_i$ is parallel (with respect to the Levi-Civita connection), then $M$ \textit{splits}, i.e.,
each point $x\in M$ has a neighborhood $U$,
which
is isometric to the direct product of the maximal integral manifolds for the distributions
obtained by parallel translation of $\,{\cal D}_i(x)$ over $U$.
If $M$ is complete and simply connected, the statement is true with~$U=M$.

We study the following problem (more general than \cite[Problem~2]{chen2}):
\textit{find a relationship between intrinsic and extrinsic curvature invariants of
$(M,g;{\cal D}_1,\ldots,{\cal D}_k)$ isometrically immersed in another Riemannian manifold $(\bar M,\bar g)$}.
For such immersions, we prove an inequality that includes the mixed scalar curvature of
$(M,g;{\cal D}_1,\ldots,{\cal D}_k)$ and the square of the mean curvature of the immersion,
and give applications for multiply twisted products and to problems non-immersion and absence of compact leaves of
foliated submanifolds.
 Although Riemannian curvature tensor belongs to intrinsic geometry, a special part called the \textit{mixed curvature} is also related to the extrinsic geometry of $(M,g;{\cal D}_1,\ldots,{\cal D}_k)$,
 see \cite{Rov-Wa-2021}.
Our inequality also contains mixed scalar curvature type invariants related to B.-Y~Chen's $\delta$-invariants for  $k\ge2$, for example, \cite{chen-b,chen-b-v}.

The results of this article can be applied also to so-called \textit{adapted} isometric immersions
$f: (M,g; {\cal D}_1,\ldots, {\cal D}_k) \to (\bar M,\bar g; \bar{\cal D}_1,\ldots, \bar{\cal D}_k)$, i.e.,
$f_*({\cal D}_i)\subset \bar{\cal D}_{i\,| f(M)}$ for $i=1,\ldots,k$.
Obviously, there are many Riemannian almost $k$-product manifolds (already with integrable distributions, e.g., a $k$-dimensional torus with irrational winding), which cannot be isometrically immersed adapted way
into some Euclidean space endowed with the standard structure of the product of $k$ Euclidean spaces.
Therefore, we supplement the above problem with the following: \textit{charac\-terize almost $k$-product manifolds $(M, g; {\cal D}_1, \ldots, {\cal D}_k)$ admitting  adapted isometric immersions in $\RR^N=\RR^{N_1}\times\ldots\times\RR^{N_k}$ with a Euclidean metric for some $N$
and $N_i$ satisfying $N = \sum_i N_i$}.

The Riemannian almost $k$-product structure on an open simply connected manifold $M$ can be obtained by a special
immersion in $(\bar M,\bar g; \bar{\cal D}_1,\ldots, \bar{\cal D}_k)$.
Let $f_*(TM)$ intersects transversally with every distribution $\bar{\cal D}_i$ restricted to $f(M)$. Then
$f: M \to \bar M$ induces pairwise orthogonal distributions ${\cal D}_i$ on $M$ with induced metric $g$,
i.e., $f_*({\cal D}_i)\subset \bar{\cal D}_{i\,| f(M)}$ and $TM=\bigoplus_{i=1}^{\,k} {\cal D}_i$.
There are topological obstacles to the existence of an almost $k$-product structure on a smooth closed manifold $M$.
For example, if $n_i=1$ for some $i$, then the Euler characteristic $\chi(M)$ is zero;
and if $n_i=1$ for all $i$ and $M$ is an odd-dimensional sphere (thus, $\chi(M)=0$), then maximal $k$ is the (known in topology) number of linear independent vector fields on an $n$-dimensional sphere.
We~do not consider here (no less complicated) topological obstacles for a closed manifold $M^n$ with $n = \sum_i n_i$ to admit a special immersion in $\RR^N=\RR^{N_1}\times\ldots\times\RR^{N_k}$ for some $N = \sum_i N_i$ such that $f_*(TM)$ has an $n_i$-dimensional transversal intersection with every distribution $\RR^{N_i}$ on $\RR^N$.

\smallskip

The article is organized as follows.
Section~\ref{sec:01} contains necessary definitions (e.g., this of the mixed scalar curvature) for a Riemannian almost $k$-product structure.
Section~\ref{sec:02} contains the main theorem providing an
inequality (which involves the mixed scalar curvature and the squared mean curvature)
for isometric immersions of such manifolds,
and several corollaries with applications to problems of non-immersing and non-existence of compact leaves of foliated submanifolds.
Section~\ref{sec:03} contains applications to isometric immersions of multiply twisted and warped products.

\section{Preliminaries}
\label{sec:01}

Let an $n$-dimensional Riemannian manifold $(M,g)$ with the Levi-Civita connection $\nabla$ and the curvature tensor $R$
be endowed with $k\ge2$ pairwise orthogonal $n_i$-dimensional distributions ${\cal D}_i\ (1\le i\le k)$
(subbundles of the tangent bundle $TM$) of ranks $n_i$ such that $\sum_{\,i=1}^k n_i = n$.
Let ${\cal D}^\bot_i$ be the orthogonal complement to ${\cal D}_i$ in $TM$ and $n^\bot_i=n-n_i$.

Let $P^\bot_i:TM\to{\cal D}^\bot_i$ be the orthoprojector. The {2nd fundamental form} ${h}_i:{\cal D}_i\times {\cal D}_i\to{\cal D}^\bot_i$ and
the {integrability tensor} ${T}_i:{\cal D}_i\times {\cal D}_i\to{\cal D}^\bot_i$ of ${\cal D}_i$ (and similarly, $h^\bot_i$ and $T^\bot_i$) are defined by
\begin{equation}\label{E-def-bT-Wol}
 2\,h_i(X,Y)= P^\bot_i(\nabla_X Y+\nabla_Y X),\quad 2\,T_i(X,Y) = P^\bot_i(\nabla_X Y-\nabla_Y X) = P^\bot_i([X, Y]) .
\end{equation}
Then $H_i=\tr_g h_i$ is called the mean curvature vector field of the distribution ${\cal D}_i$.
 A distribution ${\cal D}_i$ is integrable (or involutive) if $T_i=0$,
and ${\cal D}_i$ is \textit{totally umbilical}, \textit{minimal}, or \textit{totally geodesic},
if  ${h}_i=({H}_i/n_i)\,g,\ {H}_i=0$, or ${h}_i=0$, respectively.

A plane that nontrivially intersects two of the available $k$ distributions on $(M,g)$
and its sectional curvature are called mixed.
Any unit vectors $X\in{\cal D}_i(x)$ and $Y\in{\cal D}_j(x)$ for $x\in M$ and $i\ne j$ define a mixed plane, and
$K(X,Y)=g(R(X,Y)\,Y,\,X)$ is its \textit{mixed sectional curvature}.

The \textit{mixed scalar curvature} of $(M,g)$ equipped with two complementary orthogonal distributions $({\cal D}_1,{\cal D}_2)$ is an averaged mixed sectional curvature (of planes that intersect ${\cal D}_1$ and ${\cal D}_2={\cal D}^\bot_1$):
\begin{equation*}
 {\rm S}_{\,\rm mix}({\cal D}_1,{\cal D}_2)=\sum\nolimits_{\,i\le n_1,\,j>n_1}K(e_i,{e}_j).
\end{equation*}
Here,
$\{e_i\}$
is an adapted local orthonormal frame on $TM={\cal D}_1\oplus{\cal D}_2$, i.e.,
 $\{e_1,\ldots, e_{n_1}\}\subset{{\cal D}_1}$.
If ${\cal D}_1$ or ${{\cal D}_2}$ is spanned by a unit vector $N$ then ${\rm S}_{\,\rm mix}({\cal D}_1,{\cal D}_2)=\Ric_{\,N,N}$.
This concept can be extended for $k>2$ distributions as follows, see \cite{rov-IF-k}.

\begin{definition}\label{D-Smix-k}\rm
Let $x\in M$ and $\{e_i\}$ on $(M,g;{\cal D}_1,\ldots,{\cal D}_k)$
be an adapted orthonormal frame on $T_xM$, i.e.,
 $\{e_1,\ldots, e_{n_1}\}\subset{{\cal D}_1(x)},\
 \ldots \
 \{e_{n_{k-1}+1},\ldots, e_{n_k}\}\subset{{\cal D}_k(x)}$.
The \textit{mixed scalar curvature} of $(M,g; {\cal D}_1,\ldots, {\cal D}_k)$
is an averaged mixed sectional curvature, the function on $M$ defined by
\[
 {\rm S}_{\,\rm mix}({\cal D}_1,\ldots,{\cal D}_k)=\sum\nolimits_{\,i<j}\,{\rm S}_{\,\rm mix}({\cal D}_i,{\cal D}_j) .
\]
Here, ${\rm S}_{\,\rm mix}({\cal D}_i,{\cal D}_j)$ is the \textit{mixed scalar curvature} of the pair $({\cal D}_i,{\cal D}_j)$
given at $x\in M$ by
\[
 {\rm S}_{\,\rm mix}({\cal D}_i(x),{\cal D}_j(x)) = \sum\nolimits_{\,n_{i-1}<a\,\le n_i,\ n_{j-1}<b\le n_j} K(e_a,{e}_b),
 \quad i\ne j,
\]
and it does not depend on the choice of~frames.
\end{definition}

By Definition~\ref{D-Smix-k}, we get
\begin{equation}\label{E-Dk-Smix}
  2\,{\rm S}_{\,\rm mix}({\cal D}_1,\ldots,{\cal D}_k) = \sum\nolimits_{\,i=1}^k\,
  {\rm S}_{\,\rm mix}({\cal D}_i,{\cal D}^\bot_i),
\end{equation}
where the coefficient 2 is due to the fact that each ${\rm S}_{\,\rm mix}({\cal D}_i,{\cal D}_j)$ participates twice in the sum.
For example, if $k=3$, then
${\rm S}_{\,\rm mix}({\cal D}_1,{\cal D}^\bot_1)={\rm S}_{\,\rm mix}({\cal D}_1,{\cal D}_2)+{\rm S}_{\,\rm mix}({\cal D}_1,{\cal D}_3)$,
etc., hence,
\begin{equation*}
  {\rm S}_{\,\rm mix}({\cal D}_1,{\cal D}^\bot_1)
  {+}{\rm S}_{\,\rm mix}({\cal D}_2,{\cal D}^\bot_2)
  {+}{\rm S}_{\,\rm mix}({\cal D}_3,{\cal D}^\bot_3)
  =2\big(
  {\rm S}_{\,\rm mix}({\cal D}_1,{\cal D}_2){+}{\rm S}_{\,\rm mix}({\cal D}_1,{\cal D}_3){+}{\rm S}_{\,\rm mix}({\cal D}_2,{\cal D}_3)
  \big).
\end{equation*}
 The scalar curvature $\tau=\tr_g\Ric$ (the trace of the Ricci tensor) of $(M,g)$ can be represented~as
\begin{equation}\label{E-Smix-3}
 \tau = 2\,{\rm S}_{\,\rm mix}({\cal D}_1,\ldots,{\cal D}_k) +\sum\nolimits_{\,i=1}^k \tau(\,{\cal D}_i),
\end{equation}
where $\tau(\,{\cal D}_i)$ are scalar curvatures  (functions on $M$) of distributions ${\cal D}_i$.
Thus, if all distributions ${\cal D}_i$ are one-dimensional then $2\,{\rm S}_{\,\rm mix}({\cal D}_1,\ldots,{\cal D}_k)=\tau$.

The {divergence} of a vector~field $X$ on $(M,g)$ is defined using a local orthonormal basis $\{e_i\}$~by
\[
 \Div X =\operatorname{trace}(Y\to\nabla_{Y} X) =\sum\nolimits_{\,i} g(\nabla_{e_i} X, \,e_i).
\]

The~following formula~\cite{wa1}  has many applications:
\begin{equation}\label{E-PW}
 \Div(H_1 + H^\bot_1) = {\rm S}_{\,\rm mix}({\cal D}_1,{\cal D}^\bot_1) +\|h_1\|^2+\|h^\bot_1\|^2-\|H_1\|^2-\|H^\bot_1\|^2-\|T_1\|^2-\|T^\bot_1\|^2;
\end{equation}
from \eqref{E-Dk-Smix} and \eqref{E-PW}, for $(M,g;{\cal D}_1,\ldots,{\cal D}_k)$ we obtain, see \cite{rov-IF-k,Rov-Wa-2021},
\begin{eqnarray}\label{E-PW3-k}
\nonumber
 && \Div\sum\nolimits_{\,i=1}^k\big(H_i + H_{i}^\bot\big) = 2\,{\rm S}_{\,\rm mix}({\cal D}_1,\ldots,{\cal D}_k)\\
 && +  \sum\nolimits_{\,i=1}^k\big(\|h_i\|^2 -\|H_i\|^2 - \|T_i\|^2 + \|h_{i}^\bot\|^2 - \|H_{i}^\bot\|^2 -\|T_{i}^\bot\|^2 \big) .
\end{eqnarray}
If all distributions ${\cal D}_1,\ldots,{\cal D}_k$ are totally umbilical, then, see \cite{Rov-Wa-2021},
\begin{equation*}
 \|h_{i}\|^2-\|H_{i}\|^2 = -\frac{n_{i}-1}{n_{i}}\,\|H_{i}\|^2,\quad
 \|h^\bot_{i}\|^2-\|H^\bot_{i}\|^2 = -\frac{n^\bot_{i}-1}{n^\bot_{i}}\,\|H^\bot_{i}\|^2;
\end{equation*}
from this and \eqref{E-PW} we get
\begin{eqnarray}\label{E-umb-T6k}
\nonumber
 &&\quad \Div\sum\nolimits_{\,i=1}^k\big(H_i + H_{i}^\bot\big) = 2\,{\rm S}_{\,\rm mix}({\cal D}_1,\ldots,{\cal D}_k) \\
 &&-\sum\nolimits_{\,i=1}^k\Big(\frac{n_{i}-1}{n_{i}}\,\|H_i\|^2
 +\frac{n^\bot_{i}-1}{n^\bot_{i}}\,\|H^\bot_i\|^2+\|T_i\|^2+\|T^\bot_i\|^2\Big).
\end{eqnarray}

\section{Main results}
\label{sec:02}

Let $\bar{h}:TM\times TM\to TM^\bot$ be the second fundamental form of
an isometric immersion $f: (M^n,g; {\cal D}_1,\ldots, {\cal D}_k) \to (\bar M^m,\bar g)$ with
$\dim {\cal D}_i=n_i$ satisfying $\sum\,_{i=1}^{k}\, n_i = n<m$.
We will identify $M$ with its image $f(M)$ and
put a top ``bar" for the objects related to $(\bar M, \bar g)$.
The~{second fundamental forms} $\bar{h}_i:{\cal D}_i\times {\cal D}_i\to TM^\bot$
are defined similarly to \eqref{E-def-bT-Wol}, as restrictions of $\bar{h}$ on ${\cal D}_i$.
The mean curvature vector $\bar{H}=\tr_{\,\bar g}\,\bar{h}$
of $f$ decomposes as $\bar{H}=\sum_{\,i}\bar{H}_i$, where $\bar{H}_i=\tr_{\,\bar g}\,\bar{h}_{i}$
is the mean curvature vector of~${\cal D}_i$.
An~isometric immersion $f$ is called \textit{mixed totally geodesic} if
\[
 \bar{h}(X,Y)=0\quad{\rm for\ all}\ \  X\in{\cal D}_i,\ Y\in{\cal D}_j,\ \ i\ne j.
\]
The Gauss equation for our immersion $f$ (a submanifold $M\subset \bar M$) has the following form:
\begin{equation}\label{E-Gauss-class}
  g(\bar R(Y,Z)U,V) = g(R(Y,Z)U,V) + g(\bar{h}(Y,U), \bar{h}(Z,V)) -g(\bar{h}(Z,U), \bar{h}(Y,V)) ,
\end{equation}
where
$U,V,Y,Z\in TM$ and
$\bar R$ and $R$ are the curvature tensors of $(\bar M^m,\bar g)$ and $(M,g)$, respectively.

\begin{definition}\rm
Let $V_i\ (i=1,\ldots, k)$ be pairwise orthogonal subspaces of $T_x\bar M$ at a point $x\in\bar M$
with $\dim V_i=n_i$.
Let $\{e_i\}$ be an orthonormal frame on $T_x \bar M$ such that
 $\{e_1,\ldots, e_{n_1}\}\subset V_1$,
 \ldots,
 $\{e_{n_{k-1}+1},\ldots, e_{n_k}\}\subset V_k$.
Define
$\bar{\rm S}_{\,\rm mix}(V_1,\ldots,V_k)= \sum\nolimits_{\,i<j}\sum\nolimits_{\,n_{i-1}<a\,\le n_i,\ n_{j-1}<b\le n_j}
 \bar K(e_a,{e}_b)$
(see Definition~\ref{D-Smix-k}),
and observe that these quantities do not depend on the choice of frames.
Set
\[
 \bar\delta_{\rm mix}(n_1,\ldots, n_k) = \sup\{\bar{\rm S}_{\,\rm mix}(V_1,\ldots,V_k): \dim V_i=n_i \}.
\]
\end{definition}

If the sectional curvature of $(\bar M, \bar g)$ satisfies $\bar K\le C$ and $\sum\nolimits_{\,i=1}^k n_i = n$, then
\[
 \bar\delta_{\rm mix}(n_1,\ldots n_k) \le C\sum\nolimits_{\,i<j} n_i\,n_j \le \frac12\,C\,k\,n^2.
\]

\begin{remark}\rm
The $\bar\delta_{\rm mix}$-invariants are related with B.-Y~Chen's $\delta$-invariants, e.g., \cite{chen-b-v}.
Indeed, 
if $k\ge2$ and the sectional curvature $\bar K\ge0$, then $\bar\delta_{\rm mix}(n_1,\ldots, n_k)\le\bar\delta(n_1,\ldots, n_k)$, where for $x\in \bar M$
\[
 2\bar\delta(n_1,\ldots, n_k)(x)=\bar\tau(x)-\inf\,\{\bar\tau({\cal D}_1)(x)+\ldots +\bar\tau({\cal D}_k)(x)\} .
\]
Also, for $\sum\,_{i=1}^{k}\,n_i=\dim\bar M$ and $k\ge2$, from \eqref{E-Smix-3} we get
$\bar\delta_{\rm mix}(n_1,\ldots, n_k)=\bar \delta(n_1,\ldots, n_k)$.
Note that B.-Y.Chen defines the scalar curvature as half of "trace Ricci", i.e., $\tau=\sum_{\,i<j} K(e_i, e_j)$.
\end{remark}

\begin{theorem}\label{T-k}
Let $f: (M,g; {\cal D}_1,\ldots, {\cal D}_k)\to(\bar M,\bar g)$ be an isometric immersion
of an almost $k$-product manifold in a Riemannian manifold.
Then
\begin{equation}\label{E-ineq-k}
 {\rm S}_{\,\rm mix}({\cal D}_1,\ldots,{\cal D}_k) \le \frac{k-1}{2\,k}\,\|\bar{H}\|^2
 +\bar\delta_{\rm mix}(n_1,\ldots,n_k).
\end{equation}
The equality in \eqref{E-ineq-k} holds if and only if $f$ is mixed totally geodesic,
$\bar{H}_1=\ldots=\bar{H}_k$
and $\bar{\rm S}_{\,\rm mix}({\cal D}_1(x)$, $\ldots,{\cal D}_k(x))=\bar\delta_{\rm mix}(n_1,\ldots,n_k)$ at each point $x\in M$.
\end{theorem}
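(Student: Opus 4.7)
The plan is to run the Gauss equation on every mixed plane, sum over an adapted frame, and then control the resulting extrinsic cross term by an elementary inequality relating $\|\bar H\|^2$ to $\sum_i\|\bar H_i\|^2$.

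\textbf{Step 1: Gauss equation on mixed planes.} Fix $x\in M$ and an adapted orthonormal frame $\{e_a\}$ at $x$ with $\{e_{n_{i-1}+1},\ldots,e_{n_i}\}\subset{\cal D}_i(x)$. Applying \eqref{E-Gauss-class} with $Y=U=e_a\in{\cal D}_i$ and $Z=V=e_b\in{\cal D}_j$ for $i\ne j$ yields
\[
 K(e_a,e_b)=\bar K(e_a,e_b)+g(\bar h(e_a,e_a),\bar h(e_b,e_b))-\|\bar h(e_a,e_b)\|^2 .
\]

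\textbf{Step 2: Summation.} Summing over all mixed pairs $(a,b)$ with $a$ in the $i$-th block and $b$ in the $j$-th block, and then over $i<j$, and using $\sum_{a\in I_i}\bar h(e_a,e_a)=\bar H_i$, I get
\[
 {\rm S}_{\,\rm mix}({\cal D}_1,\ldots,{\cal D}_k)
 =\bar{\rm S}_{\,\rm mix}({\cal D}_1(x),\ldots,{\cal D}_k(x))
 +\sum\nolimits_{\,i<j}g(\bar H_i,\bar H_j)-\sum\nolimits_{\,i<j}\sum\nolimits_{a\in I_i,\,b\in I_j}\|\bar h(e_a,e_b)\|^2 .
\]
The last sum is non-negative; dropping it gives an inequality. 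Also, by the definition of $\bar\delta_{\rm mix}$, the first term on the right is bounded by $\bar\delta_{\rm mix}(n_1,\ldots,n_k)$.

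\textbf{Step 3: The coefficient $(k-1)/(2k)$.} Expanding $\|\bar H\|^2=\|\sum_i\bar H_i\|^2$ gives
\[
 2\sum\nolimits_{\,i<j}g(\bar H_i,\bar H_j)=\|\bar H\|^2-\sum\nolimits_{i=1}^{k}\|\bar H_i\|^2 .
\]
The Cauchy--Schwarz (or power-mean) inequality applied to $\bar H=\sum_i\bar H_i$ gives $\sum_i\|\bar H_i\|^2\ge\tfrac{1}{k}\|\bar H\|^2$, so
\[
 \sum\nolimits_{\,i<j}g(\bar H_i,\bar H_j)\le\frac{k-1}{2k}\,\|\bar H\|^2 ,
\]
with equality iff $\bar H_1=\ldots=\bar H_k$. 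Combining with Step~2 yields \eqref{E-ineq-k}.

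\textbf{Step 4: Equality case.} Tracking the three inequalities used: (i) dropping the $\|\bar h(e_a,e_b)\|^2$ terms is lossless iff $\bar h(X,Y)=0$ for every $X\in{\cal D}_i,\ Y\in{\cal D}_j,\ i\ne j$, i.e., $f$ is mixed totally geodesic; (ii) equality in the Cauchy--Schwarz step amounts to $\bar H_1=\ldots=\bar H_k$; (iii) equality in the $\bar\delta_{\rm mix}$ bound is precisely $\bar{\rm S}_{\,\rm mix}({\cal D}_1(x),\ldots,{\cal D}_k(x))=\bar\delta_{\rm mix}(n_1,\ldots,n_k)$ at every $x\in M$. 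All three together characterize equality. None of these steps is really an obstacle; the only subtle point is recognizing that the sharp coefficient $(k-1)/(2k)$ comes from the identity $\inf\{\sum_i\|v_i\|^2:\sum_i v_i=\bar H\}=\|\bar H\|^2/k$ rather than from any curvature estimate.
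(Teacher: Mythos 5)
Your proof is correct and follows essentially the same route as the paper: the identity in your Step 2 is exactly the paper's combination of the traced Gauss equations (full trace minus the block traces along each ${\cal D}_i$), and your bound $\sum_{i<j}g(\bar H_i,\bar H_j)\le\frac{k-1}{2k}\|\bar H\|^2$ is the paper's algebraic inequality $\sum_i\|\bar H_i\|^2\ge\frac1k\|\bar H\|^2$ in disguise. Your direct summation over mixed planes and the vector form of Cauchy--Schwarz are marginally cleaner than the paper's detour through $\tau$, $\tau({\cal D}_i)$ and the projection onto the direction of $\bar H$, but the substance, including the equality analysis, is identical.
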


\begin{proof}
Tracing the Gauss equation \eqref{E-Gauss-class} for the immersion $f$ yields the equality
\begin{equation}\label{E-Si}
  \bar\tau_{\,|M} - \tau = \|\bar{h}\|^2 - \|\bar{H}\|^2,
\end{equation}
where $\bar\tau_{\,|M}$ and $\tau$ are the scalar curvatures of $M$ for the curvature tensors $\bar R$ and $R$, respectively.
Tracing the Gauss equation \eqref{E-Gauss-class} on $M$ along ${\cal D}_i$, we get
\begin{equation}\label{E-Sii}
  \bar\tau(\,{\cal D}_i) - \tau(\,{\cal D}_i) =  \|\bar{h}_i\|^2 - \|\bar{H}_i\|^2,
\end{equation}
where $\bar\tau(\,{\cal D}_i)$ and $\tau(\,{\cal D}_i)$ are the scalar curvatures of ${\cal D}_i$
for the curvature tensors $\bar R$ and $R$.

Suppose that $\bar{H}\ne0$ on an open set $U\subset M$ and
extend over $U$ an adapted local orthonormal frame $\{e_1,\ldots,e_n\}$ of $(M,g)$ by $e_{n+1}$ parallel to $\bar{H}$.
Using the algebraic inequality $a_1^2+\ldots+a_k^2\ge \frac1k\,(a_1+\ldots+a_k)^2$
for $a_i=g(\bar{H}_i, e_{n+1})$, we find
\begin{equation}\label{E-Si3}
  \sum\nolimits_{\,i}\|\bar{H}_i\|^2 \ge \sum\nolimits_{\,i} g(\bar{H}_i, e_{n+1})^2
  \ge \frac1k\,\|\bar{H}\|^2,
\end{equation}
and the equality holds if and only if $\bar{H}_1=\ldots=\bar{H}_k$.
The above inequality is trivially satisfied for $\bar{H}=0$, hence it is valid on $M$.
Note also that
\begin{equation}\label{E-Si4}
 \|\bar{h}\|^2 = \sum\nolimits_{\,i}\|\bar{h}_i\|^2 +\sum\nolimits_{\,i<j}\|\bar{h}^{\rm mix}_{ij}\|^2
 \ge \sum\nolimits_{\,i}\|\bar{h}_i\|^2 ,
\end{equation}
where
$\|\bar{h}^{\rm mix}_{ij}\|^2=\sum_{\,e_a\in{\cal D}_i,\, e_b\in{\cal D}_j} \|\bar{h}(e_a,e_b)\|^2 $,
and $\bar{h}^{\rm mix}_{ij}=0\ (\forall\,i<j)$ if and only if $f$ is mixed totally geodesic.
By \eqref{E-Si}, \eqref{E-Sii}, \eqref{E-Si3}, \eqref{E-Si4} and the equalities
\begin{eqnarray*}
 \bar\tau_{\,|M} = 2\,\bar{\rm S}_{\,\rm mix}({\cal D}_1,\ldots,{\cal D}_k) +\sum\nolimits_{\,i}\bar\tau(\,{\cal D}_i),\\
 \tau = 2\,{\rm S}_{\,\rm mix}({\cal D}_1,\ldots,{\cal D}_k) +\sum\nolimits_{\,i}\tau(\,{\cal D}_i),
\end{eqnarray*}
see \eqref{E-Smix-3}, we obtain
\begin{eqnarray*}
\nonumber
 && 2\,{\rm S}_{\,\rm mix}({\cal D}_1,\ldots,{\cal D}_k) =  2\,\bar{\rm S}_{\,\rm mix}({\cal D}_1,\ldots,{\cal D}_k)
 + \sum\nolimits_{\,i} (\bar\tau(\,{\cal D}_i) - \tau(\,{\cal D}_i)) + \|\bar{H}\|^2 - \|\bar{h}\|^2 \\
\nonumber
 && \le 2\,\bar\delta_{\rm mix}(n_1,\ldots,n_k) - ( \|\bar{h}\|^2 - \sum\nolimits_{\,i}\|\bar{h}_i\|^2 )
  + (\|\bar{H}\|^2 - \sum\nolimits_{\,i}\|\bar{H}_i\|^2 ) \\
 && \le 2\,\bar\delta_{\rm mix}(n_1,\ldots,n_k) + \frac{k-1}k\,\|\bar{H}\|^2 ,
\end{eqnarray*}
(and the equality holds in the second line if and only if
$\bar{\rm S}_{\,\rm mix}({\cal D}_1(x),\ldots,{\cal D}_k(x))=\bar\delta_{\rm mix}(n_1,\ldots,n_k)$ at each point $x\in M$)
that proves \eqref{E-ineq-k}.
\end{proof}

\begin{example}\rm
Consider distributions ${\cal D}_i\ (i=1,\ldots,k)$ on a domain $M$ on a unit sphere $S^n(1)$ in $\RR^{n+1}$.
Using coordinate charts, we can take integrable distributions ${\cal D}_i$,
and $M$ diffeomorphic to the product of $k$ manifolds.
Extend the distributions ${\cal D}_i$ to distributions $\bar{\cal D}_i\ (i=1,\ldots,k)$ on
an open neighborhood of $M$ in
$\RR^{n+1}\smallsetminus\{0\}$ by applying the homothety
and complementing, say, $\bar{\cal D}_1$ with normals to spheres $S^n(r)$ of radius $r>0$.
Thus $\bar\delta_{\rm mix}(n_1,\ldots,n_k)=0$.

1. For $k=2$, suppose that $M\subset S^n(1)$ is locally diffeomorphic to the product $\RR^{n_1} \times \RR^{n_2}$.

Let $n=3$ and $n_1=1, n_2=2$, then $\|\bar{H}\|^2
=9$, ${\rm S}_{\,\rm mix}({\cal D}_1,{\cal D}_2) = 2$.
Thus \eqref{E-ineq-k} reduces to the inequality $2 < 9/4$ (note that $\bar{H}_1=\frac13\,\bar{H} \not = \frac23\,\bar{H} =\bar{H}_2$).

Let $n=4$, $n_1=n_2=2$ and locally $M\subset S^4(1)$ be diffeomorphic to $\RR^2 \times \RR^2$.
Then $\|\bar{H}\|^2
=16$, $\bar{H}_1=\bar{H}_2$,
${\rm S}_{\,\rm mix}({\cal D}_1,{\cal D}_2) = 4$ and \eqref{E-ineq-k} reduces to the equality $4 = 16/4$.

2. Let $k=3$ and $n_1=n_2=n_3=1$, i.e., we consider three 1-dimensional distributions ${\cal D}_i\ (i=1,2,3)$ on a domain $M\subset S^3(1)\subset\RR^4$. Recall that $\bar\delta_{\rm mix}(n_1,n_2, n_3)=0$.
Then $\|\bar{H}\|^2
=9$,
$\bar{H}_1=\bar{H}_2=\bar{H}_3$,
${\rm S}_{\,\rm mix}({\cal D}_1,{\cal D}_2,{\cal D}_3) = 3$,
 and \eqref{E-ineq-k} reduces to the equality $3 = (2/6)\cdot 9$.
\end{example}

\begin{example}[see \cite{chen-d}]\rm
Let $F_1\times_{u} M_2$ be a multiply warped product with constant mixed sectional curvature.
Let $f_1: M_1\to F_1$ be a minimal immersion, and let $v=(v_1,\ldots, v_k)$, where $v_i\ (i>1)$ are the restrictions  of $u_i$ on $F_1$.
Then the following warped product immersion:
\[
 f=(f_1,{\rm id}): M_1\times_{v} M_2 \to F_1\times_{(u_0,u)} M_2,
\]
is a mixed totally geodesic
submanifold, which satisfies the condition $\tr\bar{h}_i=0\ (i\le k)$.
Thus, $f$ satisfies the equality in \eqref{E-ineq-k}, i.e., \eqref{E-ineq-k} is sharp.
\end{example}

\begin{remark}\rm
Observe that $\bar\delta_{\rm mix}(1,q)$ is the supremum of the $q$-th Ricci curvature of $(\bar M,\bar g)$,
which interpolates between Ricci curvature and sectional curvature.
Recall that for $q+1$ orthonormal vectors $V=\{E_0,E_1,\ldots,E_{q}\}$ the \textit{$q$-th Ricci curvature}
of $(\bar M,\bar g)$~is
 $\overline\Ric_{\,q}(V)=\sum\nolimits_{\,i=1}^{q}\,\bar K(E_0,E_i)$,
see, for example,~\cite{r-98}.
If $\overline\Ric_{\,q}$
is constant for some $1<q<\dim \bar M-1$,
then $\bar M$ has constant sectional curvature.
Nevertheless, the class of Riemannian manifolds with positive (or negative) $q$-th Ricci curvature is larger than the class of manifolds with positive (or negative) sectional curvature. For example,
the product of two unit $(n>2)$-dimensional spheres
has positive
$(n+1)-$th Ricci curvature and nonnegative sectional curvature.
\end{remark}

\begin{corollary}
Let $f: (M,g; {\cal D}_1,{\cal D}_2) \to (\bar M,\bar g)$ be an isometric immersion
of an almost product manifold
in a Riemannian manifold.
If ${\cal D}_1$ is spanned by a unit vector field $N$, then
\begin{equation}\label{E-ineq-k2}
 \Ric_{N,N} \le \frac{1}{4}\,\|\bar{H}\|^2 + \bar r_{q},
\end{equation}
where $q=\dim{\cal D}_2$
and $\bar r_q$ is the supremum of the $q$-th Ricci curvature of $(\bar M,\bar g)$.
The equality in \eqref{E-ineq-k2} holds if and only if $f$ is mixed totally geodesic,
$\bar{H}_1=\bar{H}_2$
and $\Ric_{N,N}
 =\bar r_{q}$ at each point~$x\in M$.
\end{corollary}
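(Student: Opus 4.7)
The plan is to deduce the corollary directly from Theorem~\ref{T-k} by specializing to $k=2$ with $n_1=1$ and $n_2=q$, and then translating both the mixed scalar curvature and the $\bar\delta_{\rm mix}$-invariant into the language of the (ambient) $q$-th Ricci curvature. So I would begin by writing down the bound \eqref{E-ineq-k} in this case, namely
\[
 {\rm S}_{\,\rm mix}({\cal D}_1,{\cal D}_2)\le \frac{1}{4}\,\|\bar H\|^2 + \bar\delta_{\rm mix}(1,q),
\]
and then work on the two sides separately.

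For the left-hand side, I would invoke the observation made in Section~\ref{sec:01} just before Definition~\ref{D-Smix-k}: if ${\cal D}_1$ is spanned by a unit vector $N$, completing $N$ to an orthonormal frame $\{N,e_2,\dots,e_n\}$ with $\{e_2,\dots,e_n\}\subset{\cal D}_2$ gives
\[
 {\rm S}_{\,\rm mix}({\cal D}_1,{\cal D}_2)=\sum\nolimits_{a=2}^{n} K(N,e_a)=\Ric_{N,N},
\]
so the left-hand side is exactly $\Ric_{N,N}$.

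For the right-hand side, I would unpack the definition of $\bar\delta_{\rm mix}(1,q)$: given any orthogonal pair $V_1=\mathrm{span}\{E_0\}$ and $V_2=\mathrm{span}\{E_1,\dots,E_q\}$ in $T_x\bar M$,
\[
 \bar{\rm S}_{\,\rm mix}(V_1,V_2)=\sum\nolimits_{i=1}^q \bar K(E_0,E_i)=\overline{\Ric}_{\,q}(V),
\]
with $V=\{E_0,E_1,\dots,E_q\}$. Taking the supremum over all such configurations yields $\bar\delta_{\rm mix}(1,q)=\bar r_q$ — the very identification anticipated in the Remark preceding the corollary. Substituting these two identities into \eqref{E-ineq-k} gives \eqref{E-ineq-k2}.

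For the equality case I would just translate the equality clause of Theorem~\ref{T-k} into the current notation: mixed total geodesy is unchanged, $\bar H_1=\cdots=\bar H_k$ becomes $\bar H_1=\bar H_2$, and $\bar{\rm S}_{\,\rm mix}({\cal D}_1(x),\dots,{\cal D}_k(x))=\bar\delta_{\rm mix}(n_1,\dots,n_k)$ becomes $\overline{\Ric}_{N,N}(x)=\bar r_q$ pointwise, matching the statement in the corollary. Since every step is a direct specialization or notational rewriting, I do not expect any real obstacle; the only thing that requires a moment of care is the bookkeeping that confirms $\bar{\rm S}_{\,\rm mix}(V_1,V_2)$ with $\dim V_1=1$ is literally the $q$-th Ricci curvature in the sense of the paper's Remark, rather than just dominated by it.
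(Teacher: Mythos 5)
Your proposal is correct and follows exactly the route the paper intends: the corollary is the specialization of Theorem~\ref{T-k} to $k=2$, $n_1=1$, $n_2=q$, combined with the identity ${\rm S}_{\,\rm mix}({\cal D}_1,{\cal D}_2)=\Ric_{N,N}$ stated before Definition~\ref{D-Smix-k} and the identification $\bar\delta_{\rm mix}(1,q)=\bar r_q$ made in the Remark on the $q$-th Ricci curvature. Your careful reading of the equality clause (that it is the ambient quantity $\bar{\rm S}_{\,\rm mix}$ that must attain $\bar r_q$) is, if anything, a more precise rendering than the paper's own wording.
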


Using the proof of Theorem~\ref{T-k}, we get the following general
inequality.

\begin{corollary}
Let~$f: (M,g) \to (\bar M,\bar g)$ be an isometric immersion of an $n$-dimensional Riemannian manifold in another Riemannian manifold.
Then for any natural numbers $n_1,\ldots,n_k$ such that $\sum_{\,i}n_i=n$ we~get
\begin{equation}\label{E-ineq-dd}
 \delta_{\rm mix}(n_1,\ldots,n_k) \le \frac{k-1}{2\,k}\,\|\bar{H}\|^2 +\bar\delta_{\rm mix}(n_1,\ldots,n_k),
\end{equation}
where $\delta_{\rm mix}(n_1,\ldots,n_k)$ are defined for $(M,g)$ similarly to $\bar\delta_{\rm mix}(n_1,\ldots,n_k)$.
\end{corollary}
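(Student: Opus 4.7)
The plan is to observe that Theorem~\ref{T-k} is essentially pointwise: its proof uses only algebra at a single point, with no derivatives of the distributions ${\cal D}_i$ ever appearing. Hence \eqref{E-ineq-dd} will follow by applying Theorem~\ref{T-k} to suitably chosen local distributions and then taking a supremum at each $x\in M$.

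In detail, I would fix $x\in M$ and arbitrary pairwise orthogonal subspaces $V_1,\ldots,V_k\subset T_xM$ with $\dim V_i=n_i$. These can always be extended to smooth pairwise orthogonal distributions ${\cal D}_1,\ldots,{\cal D}_k$ on a neighborhood $U$ of $x$; no integrability is required, so this is a routine frame-extension. Applying Theorem~\ref{T-k} to $(U,g;{\cal D}_1,\ldots,{\cal D}_k)\to(\bar M,\bar g)$ and evaluating at $x$ yields
\[
 {\rm S}_{\,\rm mix}(V_1,\ldots,V_k) \le \frac{k-1}{2k}\,\|\bar H(x)\|^2 + \bar{\rm S}_{\,\rm mix}(V_1,\ldots,V_k) \le \frac{k-1}{2k}\,\|\bar H(x)\|^2 + \bar\delta_{\rm mix}(n_1,\ldots,n_k)(x),
\]
where the second inequality is just the definition of $\bar\delta_{\rm mix}$. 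Since the right-hand side depends only on pointwise data at $x$, taking the supremum of the left-hand side over all admissible tuples $(V_1,\ldots,V_k)\subset T_xM$ produces \eqref{E-ineq-dd}.

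The only conceptual point to verify is that the proof of Theorem~\ref{T-k} is genuinely pointwise, so that the local extensions of the $V_i$ have no bearing on the computation at $x$. Rereading that proof, the argument rests solely on the traced Gauss equations \eqref{E-Si}--\eqref{E-Sii}, the orthogonal decomposition \eqref{E-Si4} of $\|\bar h\|^2$ with respect to an adapted orthonormal frame, and the algebraic inequality \eqref{E-Si3} applied to the components of $\bar H$ along a unit normal $e_{n+1}\parallel\bar H(x)$ (trivial when $\bar H(x)=0$); each of these is an identity at a single point. Thus the chosen extensions enter only through their values at $x$, and one could equally well rerun the entire algebra of the proof of Theorem~\ref{T-k} at $x$ using just $V_1,\ldots,V_k$ and an adapted basis of $T_xM$. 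I do not anticipate any essential obstacle.
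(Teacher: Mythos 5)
Your proposal is correct and matches the paper's intent: the paper proves this corollary with the single remark ``Using the proof of Theorem~\ref{T-k}'', i.e.\ exactly your observation that the argument for \eqref{E-ineq-k} is pointwise algebra (traced Gauss equations, the decomposition of $\|\bar h\|^2$, and the Cauchy--Schwarz-type inequality for the $\bar H_i$) and so applies verbatim to arbitrary orthogonal subspaces $V_1,\ldots,V_k\subset T_xM$, after which one takes the supremum over such tuples. Your extra care in noting that one need not even extend the $V_i$ to distributions is a correct and worthwhile clarification, but it is not a different route.
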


\begin{remark}\rm
An ``ideal submanifold" is one that has the least integrated square of the mean curvature (total tension from its ambient space).
Several authors studied $\delta$-ideal submanifolds, e.g., \cite{chen-b-v}.
Similarly, an isometric immersion of a Riemannian $n$-manifold in another Riemannian manifold is called here
$\delta_{\rm mix}(n_1,\ldots, n_k)$-\textit{ideal} if it satisfies the equality case of \eqref{E-ineq-dd}.
We would like to understand how $\delta_{\rm mix}$-ideal and Chen's $\delta$-ideal submanifolds are related.
\end{remark}

The following consequence of Theorem~\ref{T-k} concerns adapted immersions between almost $k$-product manifolds.
First,
for $(\bar M,\bar g; \bar{\cal D}_1,\ldots,\bar{\cal D}_k)$
define invariants similar to $\bar\delta_{\rm mix}(n_1,\ldots,n_k)$.
Let $V_i\subset\bar{\cal D}_i(x)$ be subspaces at a point $x\in\bar M$ such that $\dim V_i=n_i\ (i=1,\ldots, k)$.
There exists an orthonormal frame $\{e_i\}$ on $T_x\bar M$ such that
 $\{e_1,\ldots, e_{n_1}\}\subset{V_1},
 \ldots \
 \{e_{n_{k-1}+1},\ldots, e_{n_k}\}\subset{V_k}$.
Put
\[
 \hat{\rm S}(V_1,\ldots,V_k)=\sum\nolimits_{\,i<j}\,\hat{\rm S}(V_i,V_j),
\quad
 \hat{\rm S}(V_i,V_j) = \sum\nolimits_{\,n_{i-1}<a\,\le n_i,\ n_{j-1}<b\le n_j} \bar K(e_a,{e}_b),
\]
and observe that these quantities do not depend on the choice of frames. Finally, set
\begin{eqnarray*}
 \hat\delta_{\rm mix}(n_1,\ldots, n_k)=\sup\{\hat{\rm S}(V_1,\ldots,V_k): V_i\subset\bar{\cal D}_i(x),\ \dim V_i=n_i,\ x\in\bar M \}.
\end{eqnarray*}
Obviously,
$\hat{\rm S}(V_i,V_j)=\bar{\rm S}(V_i,V_j)$, but $\hat\delta_{\rm mix}(n_1,\ldots, n_k)\le \bar\delta_{\rm mix}(n_1,\ldots, n_k)$.

\begin{corollary}
Let $f: (M,g; {\cal D}_1,\ldots,{\cal D}_k) \to (\bar M,\bar g; \bar{\cal D}_1,\ldots,\bar{\cal D}_k)$
be an adapted isometric immersion between Riemannian almost $k$-product manifolds.
Then the following inequality holds:
\begin{equation}\label{E-ineq-2-k}
 {\rm S}_{\,\rm mix}({\cal D}_1,\ldots,{\cal D}_k) \le \frac{k-1}{2\,k}\,\|\bar{H}\|^2
 + \hat\delta_{\rm mix}(n_1,\ldots,n_k) .
\end{equation}
The equality in \eqref{E-ineq-2-k} holds if and only if $f$ is mixed totally geodesic,
$\bar{H}_1=\ldots=\bar{H}_k$ \
and $\hat{\rm S}_{\,\rm mix}({\cal D}_1(x),\ldots,{\cal D}_k(x))=\hat\delta_{\rm mix}(n_1,\ldots,n_k)$ at each point $x\in M$.
\end{corollary}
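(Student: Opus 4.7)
The plan is to follow the proof of Theorem~\ref{T-k} verbatim, while replacing the last bound on $\bar{\rm S}_{\,\rm mix}$ by a tighter one that exploits the adapted structure. The key observation is that if $f$ is adapted, then $f_*({\cal D}_i(x)) \subset \bar{\cal D}_i(x)$ for every $x \in M$, so the particular subspaces $V_i := f_*({\cal D}_i(x))$ that appear when we restrict $\bar g$ to $f_*(T_xM)$ are admissible inputs for the supremum defining $\hat\delta_{\rm mix}$, not merely for the larger supremum defining $\bar\delta_{\rm mix}$.

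Concretely, I would first re-derive the identity
\begin{equation*}
 2\,{\rm S}_{\,\rm mix}({\cal D}_1,\ldots,{\cal D}_k) = 2\,\bar{\rm S}_{\,\rm mix}({\cal D}_1,\ldots,{\cal D}_k) + \sum\nolimits_{\,i}(\bar\tau(\,{\cal D}_i) - \tau(\,{\cal D}_i)) + \|\bar H\|^2 - \|\bar h\|^2
\end{equation*}
exactly as in the proof of Theorem~\ref{T-k}, combining the trace of the Gauss equation \eqref{E-Si} and the decomposition \eqref{E-Smix-3}. Then \eqref{E-Sii}, \eqref{E-Si4} and \eqref{E-Si3} give the two non-negative quantities $\|\bar h\|^2 - \sum_i\|\bar h_i\|^2 = \sum_{i<j}\|\bar h^{\rm mix}_{ij}\|^2$ and $\sum_i\|\bar H_i\|^2 - \tfrac1k\|\bar H\|^2$, which are controlled in exactly the same way as before to produce the $\tfrac{k-1}{2k}\|\bar H\|^2$ term on the right.

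The only new step is the substitution of bounds for $\bar{\rm S}_{\,\rm mix}({\cal D}_1(x),\ldots,{\cal D}_k(x))$. Since the immersion is adapted, at each $x \in M$ the subspaces $V_i = f_*({\cal D}_i(x)) \subset \bar{\cal D}_i(x)$ satisfy $\dim V_i = n_i$, so the quantity $\bar{\rm S}_{\,\rm mix}({\cal D}_1(x),\ldots,{\cal D}_k(x))$ equals $\hat{\rm S}(V_1,\ldots,V_k)$ in the notation preceding the corollary, and is therefore bounded above by $\hat\delta_{\rm mix}(n_1,\ldots,n_k)$. This is precisely what allows us to replace $\bar\delta_{\rm mix}$ by $\hat\delta_{\rm mix}$ in \eqref{E-ineq-k} and obtain \eqref{E-ineq-2-k}.

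For the equality case, the analysis is identical to that of Theorem~\ref{T-k}: the inequality in \eqref{E-Si4} becomes equality exactly when $f$ is mixed totally geodesic, the inequality \eqref{E-Si3} becomes equality exactly when $\bar H_1=\ldots=\bar H_k$, and the bound $\bar{\rm S}_{\,\rm mix} \le \hat\delta_{\rm mix}$ is saturated precisely when $\hat{\rm S}_{\,\rm mix}({\cal D}_1(x),\ldots,{\cal D}_k(x)) = \hat\delta_{\rm mix}(n_1,\ldots,n_k)$ pointwise. There is really no obstacle here; the proof is essentially a bookkeeping refinement of the previous argument, with the adapted hypothesis used solely to tighten the ambient supremum from $\bar\delta_{\rm mix}$ to $\hat\delta_{\rm mix}$.
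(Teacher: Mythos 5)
Your proposal is correct and matches the paper's intended argument: the paper gives no separate proof for this corollary, presenting it as an immediate consequence of Theorem~\ref{T-k} where the only new ingredient is exactly the observation you make, namely that the adapted hypothesis forces the subspaces $f_*({\cal D}_i(x))\subset\bar{\cal D}_i(x)$ to be admissible in the (smaller) supremum defining $\hat\delta_{\rm mix}$, so that $\bar{\rm S}_{\,\rm mix}({\cal D}_1(x),\ldots,{\cal D}_k(x))=\hat{\rm S}({\cal D}_1(x),\ldots,{\cal D}_k(x))\le\hat\delta_{\rm mix}(n_1,\ldots,n_k)$. The rest of your argument, including the equality analysis, reproduces the proof of Theorem~\ref{T-k} verbatim, which is precisely what is required.
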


Applying \eqref{E-PW} to \eqref{E-ineq-k} on a compact manifold $M$, gives the following

\begin{proposition}
In conditions of Theorem~\ref{T-k}, if $M$ is compact, then
\begin{eqnarray*}
 \int_M\sum\nolimits_{\,i=1}^k(\|H_i\|^2+\|H^\bot_i\|^2-\|h_i\|^2-\|h^\bot_i\|^2+\|T_i\|^2+\|T^\bot_i\|^2)\,d\vol_g \\
 \le \frac{k-1}{2\,k}\int_M \|\bar{H}\|^2\,d\vol_g + \bar\delta_{\rm mix}(n_1,\ldots,n_k)\,{\rm Vol}(M,g).
\end{eqnarray*}
\end{proposition}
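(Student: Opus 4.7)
The plan is to combine the divergence identity \eqref{E-PW3-k} with the pointwise bound \eqref{E-ineq-k} of Theorem~\ref{T-k}, and then integrate over the compact manifold $M$. The key observation is that the integrand on the left-hand side of the desired inequality is, up to a sign and a divergence term, exactly the quantity appearing in \eqref{E-PW3-k}.

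First, I would rewrite \eqref{E-PW3-k} so as to isolate the sum of norms of mean curvatures, second fundamental forms and integrability tensors. Moving the norm terms to the left and the divergence to the right yields the pointwise identity
\[
 \sum\nolimits_{i=1}^k\bigl(\|H_i\|^2+\|H_i^\bot\|^2-\|h_i\|^2-\|h_i^\bot\|^2+\|T_i\|^2+\|T_i^\bot\|^2\bigr) = 2\,{\rm S}_{\,\rm mix}({\cal D}_1,\ldots,{\cal D}_k) - \Div\sum\nolimits_{i=1}^k\bigl(H_i+H_i^\bot\bigr).
\]

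Next, I would apply \eqref{E-ineq-k} (multiplied by $2$) to bound the mixed scalar curvature on the right-hand side by $\frac{k-1}{k}\,\|\bar{H}\|^2 + 2\,\bar\delta_{\rm mix}(n_1,\ldots,n_k)$, turning the identity into a pointwise inequality. Finally, I would integrate against $d\vol_g$ over $M$. Since $M$ is compact (and, as is tacit in this context, without boundary), the divergence theorem kills $\int_M \Div\sum_{i=1}^k(H_i+H_i^\bot)\,d\vol_g$, while $\bar\delta_{\rm mix}(n_1,\ldots,n_k)$ is a constant that integrates to $\bar\delta_{\rm mix}(n_1,\ldots,n_k)\,{\rm Vol}(M,g)$, yielding the claimed inequality.

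There is essentially no substantive obstacle: the proposition is a one-step consequence of Theorem~\ref{T-k} combined with the integrated form of \eqref{E-PW3-k}. The only care required is bookkeeping the factor $2$ relating $2\,{\rm S}_{\,\rm mix}$ in the divergence identity to the ${\rm S}_{\,\rm mix}$ appearing in \eqref{E-ineq-k}, and checking that the $T_i$ and $T_i^\bot$ terms enter the integrated estimate with the correct sign after the rearrangement.
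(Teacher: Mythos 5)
Your approach --- rearranging \eqref{E-PW3-k}, bounding $2\,{\rm S}_{\,\rm mix}$ via \eqref{E-ineq-k}, and integrating over the closed manifold so that the divergence term drops out --- is exactly the route the paper indicates (its entire justification is ``Applying \eqref{E-PW} to \eqref{E-ineq-k} on a compact manifold $M$''). However, the bookkeeping you dismiss as ``the only care required'' does not actually close. Integrating \eqref{E-PW3-k} over compact $M$ gives
\[
 \int_M\sum\nolimits_{\,i=1}^k\bigl(\|H_i\|^2+\|H_i^\bot\|^2-\|h_i\|^2-\|h_i^\bot\|^2+\|T_i\|^2+\|T_i^\bot\|^2\bigr)\,d\vol_g
 \;=\; 2\int_M {\rm S}_{\,\rm mix}({\cal D}_1,\ldots,{\cal D}_k)\,d\vol_g ,
\]
and applying \eqref{E-ineq-k} to the right-hand side produces the upper bound
$\frac{k-1}{k}\int_M\|\bar{H}\|^2\,d\vol_g + 2\,\bar\delta_{\rm mix}(n_1,\ldots,n_k)\,{\rm Vol}(M,g)$ --- that is, \emph{twice} the right-hand side displayed in the Proposition. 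So the argument you describe establishes the inequality with constants $\frac{k-1}{k}$ and $2\,\bar\delta_{\rm mix}$, not the one as stated; since $A\le 2B$ does not imply $A\le B$ in general, the stated form does not follow. Most likely the Proposition carries a stray factor of $2$ (the paper's one-line proof is identical to your argument and would suffer the same discrepancy), but your claim that the computation ``yields the claimed inequality'' is not accurate: you should either record the inequality your derivation actually proves, or supply the additional idea needed to halve the bound, which neither you nor the paper provides.
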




Recall that $(M,g;{\cal D}_1,\ldots,{\cal D}_k)$ \textit{splits} if
all distributions  ${\cal D}_i$ are integrable and
$M$ is locally the product $F_1\times\ldots\times F_k$ with canonical foliations tangent to ${\cal D}_i$.

The following statement is about splitting and non-existence of isometric immersions.

\begin{corollary}\label{C-new2}
In conditions of Theorem~\ref{T-k}, let $(M,g)$ be complete open and ${\cal D}_i\ (i=1,\ldots,k)$ totally umbilical
with $\|H_i\|_g\in{\rm L}^1(M,g)$.
If $\bar\delta_{\rm mix}(n_1,\ldots,n_k)\le0$ $($for example, $(\bar M,\bar g)$ has nonpositive sectional curvature)
and $f(M)$ is minimal, then $(M,g; {\cal D}_1,\ldots, {\cal D}_k)$ splits along the distributions
and $\bar\delta_{\rm mix}(n_1,\ldots,n_k)=0$.
In particular, there are no
minimal isometric immersions of $M$ into $\bar M$ when $\bar\delta_{\rm mix}(n_1,\ldots,n_k)<0$.
\end{corollary}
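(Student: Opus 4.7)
The plan is to combine Theorem~\ref{T-k} (specialized to the minimal case) with the divergence identity \eqref{E-umb-T6k} for totally umbilical distributions, and then promote the resulting pointwise inequality to a rigidity statement via a Karp-type integration theorem on the complete non-compact manifold $M$.

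First, since $f(M)$ is minimal, $\bar{H}\equiv 0$, and Theorem~\ref{T-k} together with $\bar\delta_{\rm mix}(n_1,\ldots,n_k)\le 0$ yields ${\rm S}_{\,\rm mix}({\cal D}_1,\ldots,{\cal D}_k)\le \bar\delta_{\rm mix}(n_1,\ldots,n_k)\le 0$ on $M$. Setting $Z=\sum_{i=1}^{k}(H_i+H_i^\bot)$, the identity \eqref{E-umb-T6k} can be rewritten as
\[
 \Div Z + \sum_{i=1}^{k}\Bigl(\tfrac{n_i-1}{n_i}\|H_i\|^2 + \tfrac{n_i^\bot-1}{n_i^\bot}\|H_i^\bot\|^2 + \|T_i\|^2 + \|T_i^\bot\|^2\Bigr) = 2\,{\rm S}_{\,\rm mix}({\cal D}_1,\ldots,{\cal D}_k) \le 0,
\]
so in particular $\Div Z\le 0$ pointwise on $M$.

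Next, I would verify that $\|Z\|_g\in L^1(M,g)$. Expanding the trace defining $H_i^\bot$ in an adapted frame of ${\cal D}_i^\bot=\bigoplus_{m\ne i}{\cal D}_m$ and using that each ${\cal D}_m$ is totally umbilical, a short computation gives $H_i^\bot=\sum_{m\ne i}P_i(H_m)$, so $\|H_i^\bot\|\le \sum_{m\ne i}\|H_m\|$, and the hypothesis $\|H_i\|_g\in L^1(M,g)$ propagates to $\|Z\|_g\in L^1(M,g)$. I then invoke Karp's divergence theorem: on a complete Riemannian manifold, if $\|Z\|_g\in L^1$ and $\Div Z$ has constant sign, then $\int_M \Div Z\,d\vol_g=0$. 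Combined with $\Div Z\le 0$, this forces $\Div Z\equiv 0$, and consequently every non-negative summand above must vanish identically: $H_i\equiv 0$, $H_i^\bot\equiv 0$, $T_i\equiv 0$, $T_i^\bot\equiv 0$ for each $i$, and simultaneously $\bar\delta_{\rm mix}(n_1,\ldots,n_k)=0$.

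Finally, $H_i\equiv 0$ combined with total umbilicity gives $h_i\equiv 0$, and analogously $h_i^\bot\equiv 0$; together with integrability $T_i\equiv 0$ this makes each ${\cal D}_i$ parallel with respect to $\nabla$. De Rham's decomposition theorem (recalled in the introduction) then yields the local splitting of $(M,g;{\cal D}_1,\ldots,{\cal D}_k)$ along the distributions. The non-existence statement in the strict case $\bar\delta_{\rm mix}(n_1,\ldots,n_k)<0$ is immediate, since the conclusion forces $\bar\delta_{\rm mix}(n_1,\ldots,n_k)=0$. The main obstacle I expect is justifying the passage from the pointwise sign of $\Div Z$ to its vanishing in the non-compact complete setting: confirming the hypotheses of Karp's theorem for $Z$ is exactly where the $L^1$-condition on $\|H_i\|$ enters, and the auxiliary identity $H_i^\bot=\sum_{m\ne i}P_i(H_m)$ is the key technical observation that makes it go through.
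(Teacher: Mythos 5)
Your argument is essentially the paper's own proof: the same vector field $\xi=\sum_{i}(H_i+H_i^\bot)$, the same combination of \eqref{E-umb-T6k} with the inequality of Theorem~\ref{T-k} under $\bar{H}=0$, the same Karp/Caminha-type divergence theorem on the complete open manifold to force $\Div\xi\equiv0$ and hence the vanishing of all the nonnegative terms, and De~Rham's decomposition theorem to conclude the splitting. The only addition is your explicit verification that $\|H_i\|_g\in{\rm L}^1$ implies $\|\xi\|_g\in{\rm L}^1$ via $H_i^\bot=\sum_{m\ne i}P_i(H_m)$, a step the paper leaves implicit.
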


\begin{proof}
Set $\xi= \sum\nolimits_{\,i} \big(H_i + H^\bot_i\big)$.
From \eqref{E-umb-T6k} and \eqref{E-ineq-k}, we get
\begin{equation*}
 \Div \xi \le \frac{k{-}1}{2\,k}\,\|\bar{H}\|^2 + \bar\delta_{\rm mix}(n_1,\ldots,n_k)
  -\sum\nolimits_{\,i=1}^k\Big(\frac{n_{i}-1}{n_{i}}\,\|H_i\|^2
 +\frac{n^\bot_{i}-1}{n^\bot_{i}}\,\|H^\bot_i\|^2+\|T_i\|^2+\|T^\bot_i\|^2\Big).
\end{equation*}
Recall \cite[Proposition~1]{csc2010} that if $(M,g)$ is a complete open Riemannian manifold with a vector field $X$
such that $\Div X\ge0$ and the norm $\|X\|_g\in{\rm L}^1(M,g)$, then $\Div X\equiv0$.
Applying this
and using $\bar{H}=0$
rules out the case $\bar\delta_{\rm mix}(n_1,\ldots,n_k)<0$, and in the case of $\bar\delta_{\rm mix}(n_1,\ldots,n_k)\le0$ provides $H_i=0$ and $T_i=0$,
that is $\bar{\cal D}_i$ span totally geodesic foliations of $(\bar M,\bar g)$ and $\bar\delta_{\rm mix}(n_1,\ldots,n_k)\equiv0$.
By De~Rham Decomposition Theorem, $(M,g)$ splits along the distributions ${\cal D}_1,\ldots, {\cal D}_k$.
\end{proof}

\begin{remark}\rm
In Corollary~\ref{C-new2}, if $M$ is closed, then there are no
isometric immersions $f$ under a~weaker condition
\[
 \frac{k-1}{2\,k}\,\int_M \|\bar{H}\|^2\,d\vol_g\le -\bar\delta_{\rm mix}(n_1,\ldots,n_k)\,{\rm Vol}(M,g).
\]
\end{remark}

The following statement concerns the existence of compact leaves on a foliated submanifold.

\begin{corollary}
Let $f: (M,g; {\cal D}_1,\ldots, {\cal D}_k) \to (\bar M,\bar g)$
be an
 isometric immersion
and the distribution ${\cal D}_1$ be minimal (for example, ${\cal D}_1$ defines a minimal foliation).
If there exists a compact submanifold $M'$ of $M$ tangent to ${\cal D}_1$, then at some point of $M'$ we~get
\begin{equation*}
 \frac{k-1}{2\,k}\,\|\bar{H}\|^2 \ge -\bar\delta_{\rm mix}(n_1,\ldots,n_k)
 -\sum\nolimits_{\,i=1}^k (\|h_i\|^2+\|h^\bot_i\|^2 -\|T^\bot_i\|^2).
\end{equation*}
Consequently, if $\,\frac{k-1}{2\,k}\,\|\bar{H}\|^2< -\bar\delta_{\rm mix}(n_1,\ldots,n_k)
 -\sum\nolimits_{\,i=1}^k (\|h_i\|^2+\|h^\bot_i\|^2 -\|T^\bot_i\|^2)$ on $M$,
then there are no compact submanifolds of $M$ tangent to ${\cal D}_1$.
\end{corollary}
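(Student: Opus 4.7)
The plan is to combine a divergence-theorem identity on the compact submanifold $M'$ with the pointwise bound of Theorem~\ref{T-k}. Two simplifications drive the calculation: the minimality of ${\cal D}_1$ gives $H_1\equiv 0$ on all of $M$, and the tangency of $M'$ to ${\cal D}_1$ makes $M'$ a local integral manifold of ${\cal D}_1$, so that the integrability tensor $T_1$ vanishes identically on $M'$.

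First I would specialise formula \eqref{E-PW} to the pair $({\cal D}_1,{\cal D}_1^\bot)$ on $M'$. The vanishing of $H_1$ and $T_1$ collapses it to
\[
 \Div H_1^\bot = {\rm S}_{\,\rm mix}({\cal D}_1,{\cal D}_1^\bot)+\|h_1\|^2+\|h_1^\bot\|^2-\|H_1^\bot\|^2-\|T_1^\bot\|^2.
\]
Since $H_1^\bot\in{\cal D}_1=TM'|_{M'}$ is tangent to $M'$, a short computation comparing the ambient and induced divergences yields $\Div H_1^\bot=\Div^{M'}H_1^\bot-\|H_1^\bot\|^2$. The two $\|H_1^\bot\|^2$ terms cancel, and integration over the compact $M'$ via Stokes' theorem produces
\[
 \int_{M'}\!\bigl[{\rm S}_{\,\rm mix}({\cal D}_1,{\cal D}_1^\bot)+\|h_1\|^2+\|h_1^\bot\|^2-\|T_1^\bot\|^2\bigr]\,d\vol_{M'}=0,
\]
so by the mean-value property there exists a point $p\in M'$ at which the integrand is nonnegative.

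In the final step I would apply Theorem~\ref{T-k} pointwise at $p$, use the identity \eqref{E-Dk-Smix} to pass from the single-pair curvature ${\rm S}_{\,\rm mix}({\cal D}_1,{\cal D}_1^\bot)$ to the full ${\rm S}_{\,\rm mix}({\cal D}_1,\ldots,{\cal D}_k)$, and absorb the nonnegative quantities $\|h_i\|^2,\|h_i^\bot\|^2$ for $i\ge 2$ into the sum $\sum_{i=1}^k$ appearing in the claim; the non-existence assertion then follows by contraposition. The main obstacle I anticipate is precisely this reconciliation: the divergence argument directly controls only the index $i=1$, whereas the stated inequality is symmetric in all $k$ distributions, and since no minimality or integrability is assumed on the remaining ${\cal D}_i$, bridging the gap requires purely pointwise structural estimates from \eqref{E-PW3-k}, together with careful book-keeping of the signed $-\|T_i^\bot\|^2$ contributions, which cannot be controlled by non-negativity alone.
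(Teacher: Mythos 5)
Your first two steps (the $i=1$ instance of \eqref{E-PW} with $H_1=T_1=0$, the correction $\Div_{M'}H_1^\bot=\Div H_1^\bot+\|H_1^\bot\|^2$, and integration over the compact $M'$) are sound and are exactly the ingredients the paper uses. But the obstacle you flag at the end is a genuine gap, not a matter of book-keeping: your integral identity controls only ${\rm S}_{\,\rm mix}({\cal D}_1,{\cal D}_1^\bot)$ together with $\|h_1\|^2+\|h_1^\bot\|^2-\|T_1^\bot\|^2$, whereas Theorem~\ref{T-k} bounds the full curvature ${\rm S}_{\,\rm mix}({\cal D}_1,\ldots,{\cal D}_k)$ from above. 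By \eqref{E-Dk-Smix} the pair curvature ${\rm S}_{\,\rm mix}({\cal D}_1,{\cal D}_1^\bot)$ is only one summand of $2\,{\rm S}_{\,\rm mix}({\cal D}_1,\ldots,{\cal D}_k)$, and the remaining summands ${\rm S}_{\,\rm mix}({\cal D}_i,{\cal D}_j)$ with $i,j\ge2$ have no sign, so a lower bound on the former combined with an upper bound on the whole sum yields nothing. Moreover the quantities $\|h_i\|^2$, $\|h_i^\bot\|^2$, $\|T_i^\bot\|^2$ for $i\ge2$ never enter your computation, so they cannot be ``absorbed'' afterwards. As outlined, the argument cannot be completed.

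The paper's route is to sum \eqref{E-PW} over all $i$, i.e., to start from \eqref{E-PW3-k} and run the divergence argument on $\xi=\sum_{i=1}^k(H_i+H_i^\bot)$ rather than on $H_1^\bot$ alone. Then $2\,{\rm S}_{\,\rm mix}({\cal D}_1,\ldots,{\cal D}_k)$ --- the quantity Theorem~\ref{T-k} actually estimates --- appears directly, together with the full sums $\sum_i\|h_i\|^2$, $\sum_i\|h_i^\bot\|^2$, $\sum_i\|T_i^\bot\|^2$, and \eqref{E-ineq-comp2} applied to every $H_i$ and $H_i^\bot$ cancels all of the $-\|H_i\|^2-\|H_i^\bot\|^2$ terms, not only those with $i=1$. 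The hypotheses enter exactly where you located them: $T_1=0$ because $M'$ is an integral manifold of ${\cal D}_1$, and $H_1=0$ (minimality of ${\cal D}_1$) to guarantee $\int_{M'}\Div_{M'}\xi\,d\vol=0$ even though $\xi$ need not be tangent to $M'$. Integrating, substituting \eqref{E-ineq-k} under the integral, and applying your mean-value observation at a point of $M'$ then gives the stated inequality; your contrapositive for the non-existence claim is fine once this is in place.
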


\begin{proof}
Let $M'$ be a compact submanifold of $M$ tangent to ${\cal D}_1$.
Using \eqref{E-PW} with $H_1=T_1=0$ and the equality
\begin{equation}\label{E-ineq-comp2}
 \Div_{M'}\, H_i = \Div\,H_i +\|H_i\|^2,\quad
 \Div_{M'} H^\bot_i = \Div\,H^\bot_i +\|H^\bot_i\|^2
\end{equation}
along $M'$, from \eqref{E-PW3-k} we obtain
\[
 \Div_{M'} \sum\nolimits_{\,i=1}^k\big( H_i + H^\bot_i \big) = 2\,{\rm S}_{\,\rm mix}({\cal D}_1,\ldots,{\cal D}_k)
 + \sum\nolimits_{\,i=1}^k\big(\|h_i\|^2 + \|h_{i}^\bot\|^2 -\|T_{i}^\bot\|^2 \big) .
\]
Integrating this on $M'$ and using \eqref{E-ineq-k} yields
\[
 \int_{M'} \Big(\frac{k-1}{2\,k}\,\|\bar{H}\|^2 +\bar\delta_{\rm mix}(n_1,\ldots,n_k)
 +\sum\nolimits_{\,i=1}^k\big(\|h_i\|^2 + \|h_{i}^\bot\|^2 -\|T_{i}^\bot\|^2 \big)\Big)d\vol_{M'} \ge 0,
\]
and both claims follow.
\end{proof}

Next, we consider special immersions of a Riemannian manifold in a Riemannian almost $n$-product mani\-fold.
For a $q$-dimensional subspace $V\subset T_x\bar M$ with $q\ge2$, the scalar curvature of $V$ is defined by
$\bar \tau(V)=\sum_{1\le a,b\le q} \bar K(e_a,e_b)$, where is an orthonormal basis of $V$, e.g., \cite{chen-b}.
Set $\bar\tau_{\,q}=\sup\{\bar\tau(V): V\subset T\bar M,\ \dim V=q\}$.
Observe that $2\,\bar\delta_{\rm mix}(\underbrace{1,\ldots,1}_q)=\bar\tau_{\,q}$.

\begin{corollary}
Let $f: (M,g) \to (\bar M,\bar g; \bar{\cal D}_1,\ldots, {\cal D}_n)$ be an isometric immersion
of an $n$-dimensio\-nal Riemannian manifold in an almost $n$-product manifold.
Suppose that for any $x\in M$ the subspace $f(T_xM)$ intersects
non-trivially every subspace $\bar{\cal D}_i(x)\ (1\le i\le n)$.
Then
\begin{equation}\label{E-ineq-k-k}
 \tau \le \frac{n-1}{2\,n}\,\|\bar{H}\|^2 + \bar\tau_{\,n}.
\end{equation}
If the equality in \eqref{E-ineq-k-k} holds then $f$ is totally geodesic.
Consequently, if $\,\tau > \bar\tau_{\,n}$
at some point of $M$, then such ``adapted" isometric immersions do not exist.
\end{corollary}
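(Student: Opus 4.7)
The plan is to reduce to Theorem~\ref{T-k} by using the transversality hypothesis to induce a compatible almost $n$-product structure on $M$. At each $x \in M$, the subspaces $V_i(x) := f_*(T_xM) \cap \bar{\cal D}_i(x)$ are pairwise orthogonal (inherited from the orthogonality of the $\bar{\cal D}_i(x)$) and each is nonzero by assumption; since their dimensions sum to at most $\dim f_*(T_xM) = n$, each has dimension exactly one and $f_*(T_xM) = \bigoplus_{i=1}^n V_i(x)$. Letting ${\cal D}_i$ be the $1$-dimensional distribution on $M$ with $f_*({\cal D}_i) = V_i$ produces $n$ pairwise orthogonal $1$-dimensional distributions on $M$ with respect to which $f$ is an adapted immersion into $(\bar M, \bar g; \bar{\cal D}_1, \ldots, \bar{\cal D}_n)$.

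With $k = n$ and $n_i = 1$ each $\tau({\cal D}_i)$ vanishes, so \eqref{E-Smix-3} collapses to $\tau = 2\,{\rm S}_{\,\rm mix}({\cal D}_1, \ldots, {\cal D}_n)$. Theorem~\ref{T-k} then supplies
\[
  {\rm S}_{\,\rm mix}({\cal D}_1, \ldots, {\cal D}_n) \le \frac{n-1}{2n}\,\|\bar H\|^2 + \bar\delta_{\rm mix}(1, \ldots, 1),
\]
and combining this with the identity $2\,\bar\delta_{\rm mix}(1, \ldots, 1) = \bar\tau_n$ (noted just before the corollary) yields the bound \eqref{E-ineq-k-k}. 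For the equality clause, the rigidity conditions of Theorem~\ref{T-k} force $f$ to be mixed totally geodesic with $\bar H_1 = \cdots = \bar H_n$ and $\bar{\rm S}_{\,\rm mix}({\cal D}_1(x), \ldots, {\cal D}_n(x)) = \bar\delta_{\rm mix}(1, \ldots, 1)$ at each $x$; in the $1$-dimensional setting this translates to $\bar h(e_i, e_j) = 0$ for $i \ne j$ and $\bar h(e_i, e_i) = \bar H/n$. Returning to \eqref{E-Si}--\eqref{E-Si4} and identifying which of the intermediate inequalities saturate then pins the common $\bar H_i$ to zero, giving $\bar h \equiv 0$. The non-existence consequence follows immediately: if $\tau > \bar\tau_n$ at some point, then \eqref{E-ineq-k-k} cannot be saturated there, and since any adapted $f$ realizing this extremal bound is forced to be totally geodesic (hence to satisfy $\tau \le \bar\tau_n$), no such immersion can exist.

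The main obstacle I anticipate is the equality analysis. Theorem~\ref{T-k} alone delivers only total umbilicity together with equality of the $\bar H_i$, so upgrading to total geodesicity requires exploiting the $1$-dimensional geometry of the induced ${\cal D}_i$ together with the saturation of the Cauchy--Schwarz step $\sum_i \|\bar H_i\|^2 \ge \|\bar H\|^2/n$ from the proof of Theorem~\ref{T-k}. Carrying out this bookkeeping cleanly---so that the common value $\bar H/n$ is forced to vanish rather than merely be constant---is the delicate step, and it also underwrites the final non-existence claim.
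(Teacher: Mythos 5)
Your reduction is exactly the paper's: use the transversality hypothesis to induce $n$ pairwise orthogonal one-dimensional distributions ${\cal D}_i$ on $M$, note that \eqref{E-Smix-3} collapses to $\tau = 2\,{\rm S}_{\,\rm mix}({\cal D}_1,\ldots,{\cal D}_n)$, and apply Theorem~\ref{T-k} with $k=n$, $n_i=1$; the paper's proof consists of precisely these two sentences. Two caveats, both of which you share with the paper rather than resolve. First, doubling the inequality \eqref{E-ineq-k} gives
$\tau \le \frac{n-1}{n}\,\|\bar{H}\|^2 + 2\,\bar\delta_{\rm mix}(1,\ldots,1) = \frac{n-1}{n}\,\|\bar{H}\|^2 + \bar\tau_{\,n}$,
whose mean-curvature coefficient is twice the $\frac{n-1}{2n}$ appearing in \eqref{E-ineq-k-k}; your sentence ``combining this with $2\,\bar\delta_{\rm mix}(1,\ldots,1)=\bar\tau_{\,n}$ yields the bound'' silently drops this factor, so as written the argument establishes only the weaker inequality.

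Second, the equality clause. The equality conditions of Theorem~\ref{T-k} give $\bar{h}(e_i,e_j)=0$ for $i\ne j$ together with $\bar{H}_1=\ldots=\bar{H}_n$, hence $\bar{h}(e_i,e_i)=\bar{H}/n$ for all $i$ --- that is, total \emph{umbilicity}, not total geodesicity. Your claim that ``returning to \eqref{E-Si}--\eqref{E-Si4} and identifying which of the intermediate inequalities saturate then pins the common $\bar{H}_i$ to zero'' is asserted rather than proved: saturation of the Cauchy--Schwarz step \eqref{E-Si3} requires only that the $\bar{H}_i$ be equal and parallel to $\bar{H}$, and none of the saturated inequalities forces $\bar{H}=0$. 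You candidly flag this as the unresolved delicate step, and the paper supplies no argument for its ``totally geodesic'' conclusion either, so you are not behind the paper here --- but the clause remains unestablished in your write-up, and an honest version of your proof should either supply the missing mechanism forcing $\bar{H}=0$ or weaken the conclusion to total umbilicity.
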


\begin{proof} By assumptions, $f$ induces one-dimensional distributions ${\cal D}_1,\ldots, {\cal D}_n$ on $(M,g)$, and the scalar curvature is  $\tau=2\,{\rm S}_{\,\rm mix}({\cal D}_1,\ldots,{\cal D}_n)$. Applying \eqref{E-ineq-k} with $n_i=1$ completes the proof.
\end{proof}


\section{Applications to multiply twisted products immersions}
\label{sec:03}

In this section,
we improve some known results on isometric immersions of multiply warped pro\-ducts by replacing the sectional curvature
in the inequality with our invariant $\bar\delta_{\rm mix}(n_1,\ldots, n_k)$.
Certainly, the following results can be extended for multiply doubly twisted product submanifolds.

\begin{definition}[e.g., \cite{chen-2017,chen-d, Rov-Wa-2021}]\rm
Let $M_2=F_2\times\ldots\times F_k$ be the product of $k-1$ manifolds $F_i$, and let $u=(u_2,\ldots,u_k)$.
A~\textit{multiply twisted product} $F_1\times_u M_2$ is the product manifold $F_1\times M_2$ with the metric
$g=g_{F_1}\oplus u_2^2\,g_{F_2}\oplus\ldots\oplus u_k^2\,g_{F_k}$,
where $u_i:F_1\times F_i\to(0,\infty)$ for each $i\ge2$.
Its special cases are twisted products ($k=2$) and {multiply warped product}, i.e., $u_i:F_1\to(0,\infty)$.
\end{definition}

For a multiply twisted product, the leaves $F_1\times\{y\}$ are totally geodesic,
and the {fibers} corresponding to $F_i\ (i\ge2)$ are totally umbilical,
and we~get $H_i=-n_i P_1\nabla(\log u_i)$ and
\begin{equation}\label{E-Dk-twisted-k}
 {\rm S}_{\,\rm mix}({\cal D}_1,\ldots,{\cal D}_k) = \sum\nolimits_{\,i=2}^k n_i\,\frac{\Delta^{(1)} u_i}{u_i}\,,
\end{equation}
see, for example, \cite[Lemma~1.40]{Rov-Wa-2021},
where $\Delta^{(1)}$ is Laplacian on functions along the {leaves}.

Recall that the Laplacian on functions is given by $\Delta f=-\Div(\nabla f)$, or, using a local orthonormal frame $\{e_i\}$,
it is given by, see, for example, \cite{chen-b},
\[
 \Delta f=\sum\nolimits_{\,i} \big((\nabla_{e_i}e_i)f - e_i(e_i f) \big).
\]

\begin{example}[$k=2$]\rm
For a {twisted product} $F_1\times_{u_2} F_2$ of Riemannian manifolds $(F_1,g_1)$ and $(F_2, g_2)$
with a positive function $u_2\in{\rm C}^\infty(F_1\times F_2)$, see \cite{pr}, we have
  $H_2=-n_2 P_1\nabla(\log u_2)$.
Then, using
\begin{equation*}
 \Div\,{H}_2 = n_2\,(\Delta_1\,u_2)/u_2 -(n_2^2-n_2)\,\|P_{1}\nabla u_2\|^2/u_2^2,
\end{equation*}
where $\Delta^{(1)}$ is the Laplacian on $(F_1,g_{1})$, and \eqref{E-PW}, we find
 ${\rm S}_{\,\rm mix}({\cal D}_1,{\cal D}_2) = n_2\frac{\Delta^{(1)} u_2}{u_2}$.
\end{example}

The following corollary generalizes results on immersed multiply twisted (warped) products in~\cite{chen-d,ol-1},
with sectional curvature of $\bar M$ instead of $\bar\delta_{\rm mix}(n_1,\ldots,n_k)$.

\begin{theorem}\label{C-004}
Let $f$ be an isometric immersion of a multiply
twisted product $F_1\times_{u} M_2$ with $M_2=F_2\times\ldots\times F_k$ and $u=(u_2,\ldots, u_k)$
in a Riemannian manifold $(\bar M,\bar g)$.
Then
\begin{equation}\label{E-ineq-ktwist}
 \sum\nolimits_{\,i=2}^k n_i\,\frac{\Delta^{(1)} u_i}{u_i} \le \frac{k-1}{2\,k}\,\|\bar{H}\|^2 + \bar\delta_{\rm mix}(n_1,\ldots,n_k) .
\end{equation}
The equality in \eqref{E-ineq-ktwist} holds if and only if $f$ is mixed totally geodesic satisfying
$\bar{H}_1=\ldots=\bar{H}_k$,
and $\bar{\rm S}_{\,\rm mix}({\cal D}_1(x),\ldots,{\cal D}_k(x))=\bar\delta_{\rm mix}(n_1,\ldots,n_k)$ at any point $x\in M$.
\end{theorem}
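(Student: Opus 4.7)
The plan is to recognize that Theorem \ref{C-004} is essentially a specialization of Theorem \ref{T-k} to the canonical almost $k$-product structure carried by a multiply twisted product, using the explicit formula \eqref{E-Dk-twisted-k} to convert the intrinsic mixed scalar curvature into the analytic expression $\sum_{i=2}^{k} n_i\,\Delta^{(1)} u_i/u_i$ appearing on the left-hand side of \eqref{E-ineq-ktwist}. So the proof should be short: interpret the structure, cite the computation, and invoke the main theorem.

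First, I would set up the almost $k$-product structure on $M = F_1 \times_u M_2$ by taking ${\cal D}_1$ to be the distribution tangent to the leaves $F_1 \times \{y\}$ and ${\cal D}_i$ (for $i \ge 2$) the distribution tangent to the fibers $\{x\} \times F_i$. With respect to the warped metric $g = g_{F_1}\oplus u_2^2 g_{F_2}\oplus\ldots\oplus u_k^2 g_{F_k}$, these $k$ distributions are pairwise orthogonal and complementary in $TM$, so $(M,g;{\cal D}_1,\ldots,{\cal D}_k)$ is a Riemannian almost $k$-product manifold of the type considered in Theorem \ref{T-k}. In particular the dimensions $n_i = \dim F_i$ satisfy $\sum_i n_i = \dim M$, and the isometric immersion $f$ induces all the extrinsic quantities ($\bar{h}$, $\bar{H}$, $\bar{H}_i$) to which Theorem \ref{T-k} refers.

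Second, I would invoke formula \eqref{E-Dk-twisted-k}, which expresses the mixed scalar curvature of the multiply twisted product purely in terms of the warping functions:
\[
 {\rm S}_{\,\rm mix}({\cal D}_1,\ldots,{\cal D}_k) = \sum\nolimits_{\,i=2}^{k} n_i\,\frac{\Delta^{(1)} u_i}{u_i}.
\]
This identity is a consequence of the fact that the leaves $F_1\times\{y\}$ are totally geodesic and the fibers corresponding to $F_i$ ($i\ge 2$) are totally umbilical with mean curvature vector $H_i = -n_i P_1\nabla(\log u_i)$; it is already recorded in the excerpt (citing \cite{Rov-Wa-2021}), so no new computation is needed.

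Third, I would apply Theorem \ref{T-k} to the almost $k$-product manifold $(M,g;{\cal D}_1,\ldots,{\cal D}_k)$ and the immersion $f$, obtaining
\[
 {\rm S}_{\,\rm mix}({\cal D}_1,\ldots,{\cal D}_k) \le \frac{k-1}{2\,k}\,\|\bar{H}\|^2 + \bar\delta_{\rm mix}(n_1,\ldots,n_k),
\]
and substitute the expression from \eqref{E-Dk-twisted-k} for the left-hand side; this yields \eqref{E-ineq-ktwist}. For the equality clause, I would simply transcribe the equality characterization from Theorem \ref{T-k}: equality in \eqref{E-ineq-ktwist} holds if and only if $f$ is mixed totally geodesic, $\bar{H}_1=\ldots=\bar{H}_k$, and $\bar{\rm S}_{\,\rm mix}({\cal D}_1(x),\ldots,{\cal D}_k(x))=\bar\delta_{\rm mix}(n_1,\ldots,n_k)$ at each $x\in M$, which is exactly what the statement of Theorem \ref{C-004} asserts.

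There is essentially no conceptual obstacle here: all the heavy lifting has already been carried out in Theorem \ref{T-k} and in the derivation of \eqref{E-Dk-twisted-k}. The only point requiring a brief sentence of justification is the first step, namely that the natural leaf/fiber decomposition of a multiply twisted product does give a bona fide Riemannian almost $k$-product structure to which Theorem \ref{T-k} applies; once this is noted, the rest is pure substitution.
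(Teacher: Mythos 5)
Your proposal is correct and coincides with the paper's own proof, which simply combines inequality \eqref{E-ineq-k} of Theorem~\ref{T-k} with formula \eqref{E-Dk-twisted-k} and transfers the equality characterization from Theorem~\ref{T-k}. The only difference is that you spell out the (routine) verification that the leaf/fiber decomposition of a multiply twisted product is an almost $k$-product structure, which the paper leaves implicit.
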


\begin{proof}
Using \eqref{E-ineq-k} and \eqref{E-Dk-twisted-k}, we obtain \eqref{E-ineq-ktwist}. For the case of equality we apply Theorem~\ref{T-k}.
\end{proof}

\begin{remark}\rm
If $f$ in Theorem~\ref{C-004} is an isometric immersion of a multiply twisted product into a
real space form $\RR^m(c)$, then \eqref{E-ineq-ktwist} reduces to
\begin{equation*}
 \sum\nolimits_{\,i=2}^k n_i\,\frac{\Delta^{(1)} u_i}{u_i} \le \frac{k-1}{2\,k}\,\|\bar{H}\|^2
 + c\sum\nolimits_{\,i<j} n_i\,n_j .
\end{equation*}
\end{remark}

The following statement concerns the existence of a compact factor on an immersed multiply twisted product manifold.
Let $P_1:F_1\times M_2 \to F_1$ be the projector.

\begin{corollary}
Let $f: F_1\times_{u} M_2 \to (\bar M,\bar g)$
be an
isometric immersion of a multiply twisted product with $M_2=F_2\times\ldots\times F_k$ and $u=(u_2,\ldots, u_k)$
in a Riemannian manifold with
\[
 \bar\delta_{\rm mix}(n_1,\ldots,n_k) < -\sum\nolimits_{\,i=2}^k n_i\,\|P_1\nabla\log u_i\|^2.
\]
If $F_1$ is compact, then for any $y\in M_2$ at some point of $F_1\times \{y\}$ we get
\begin{equation*}
 \frac{k-1}{2\,k}\,\|\bar{H}\|^2 \ge -\bar\delta_{\rm mix}(n_1,\ldots,n_k) -\sum\nolimits_{\,i=2}^k n_i\,\|P_1\nabla\log u_i\|^2 .
\end{equation*}
Consequently, if
\[
 \frac{k-1}{2\,k}\,\|\bar{H}\|^2 < -\bar\delta_{\rm mix}(n_1,\ldots,n_k) - \sum\nolimits_{\,i=2}^k n_i\,\|P_1\nabla\log u_i\|^2
\]
is satisfied on $F_1\times M_2$, then $F_1$ is non-compact.
\end{corollary}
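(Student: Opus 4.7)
The plan is to restrict the inequality \eqref{E-ineq-ktwist} of Theorem~\ref{C-004} to a single leaf $F_1\times\{y\}$, integrate it, and exploit compactness of $F_1$ to remove the Laplacian terms via the divergence theorem. Since the leaves $F_1\times\{y\}$ of a multiply twisted product are totally geodesic (hence isometric copies of $(F_1,g_{F_1})$), the Laplacian $\Delta^{(1)}$ that appears in \eqref{E-Dk-twisted-k} is exactly the Laplacian of the closed Riemannian manifold $F_1\times\{y\}$ with its induced metric.

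First I would rewrite the quantity $\Delta^{(1)} u_i/u_i$ in terms of $\log u_i$. Using the paper's convention $\Delta f=-\Div(\nabla f)$, a direct computation gives
\[
 \Delta^{(1)}(\log u_i) \;=\; \frac{\Delta^{(1)} u_i}{u_i} \;+\; \|P_1\nabla\log u_i\|^2,
\]
so that $\Delta^{(1)} u_i/u_i=\Delta^{(1)}(\log u_i)-\|P_1\nabla\log u_i\|^2$. Plugging this into \eqref{E-Dk-twisted-k} and then into \eqref{E-ineq-ktwist}, the pointwise inequality along $F_1\times\{y\}$ becomes
\[
 \sum\nolimits_{\,i=2}^k n_i\,\Delta^{(1)}(\log u_i) \;-\; \sum\nolimits_{\,i=2}^k n_i\,\|P_1\nabla\log u_i\|^2
 \;\le\; \frac{k-1}{2k}\,\|\bar H\|^2 + \bar\delta_{\rm mix}(n_1,\ldots,n_k).
\]

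Now I would integrate both sides over the compact closed leaf $F_1\times\{y\}$. By the divergence theorem each integral $\int_{F_1\times\{y\}}\Delta^{(1)}(\log u_i)\,d\vol$ vanishes, which leaves
\[
 \int_{F_1\times\{y\}}\!\Big(\tfrac{k-1}{2k}\|\bar H\|^2 + \bar\delta_{\rm mix}(n_1,\ldots,n_k) + \sum\nolimits_{\,i=2}^k n_i\,\|P_1\nabla\log u_i\|^2\Big)\,d\vol\;\ge\;0.
\]
Since the integrand is continuous on the compact manifold $F_1\times\{y\}$, it must be nonnegative at some point, which is exactly the first claim of the corollary. The second claim follows by contradiction: if the strict reverse inequality held everywhere on $F_1\times M_2$, the integrand above would be strictly negative at every point of $F_1\times\{y\}$, contradicting the integrated inequality; hence $F_1$ cannot be compact.

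The only real subtlety is ensuring the correct sign in passing from $\Delta^{(1)}u_i/u_i$ to $\|P_1\nabla\log u_i\|^2$ under the paper's sign convention for $\Delta$, and justifying that $P_1\nabla\log u_i$ restricted to the leaf coincides with the intrinsic gradient of $\log u_i|_{F_1\times\{y\}}$ — both straightforward, since $F_1\times\{y\}$ is totally geodesic and ${\cal D}_1$ is tangent to it. No further estimates are needed.
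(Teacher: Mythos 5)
Your proposal is correct and follows essentially the same route as the paper: rewrite $\Delta^{(1)}u_i/u_i$ as $\Delta^{(1)}\log u_i-\|P_1\nabla\log u_i\|^2$, substitute into \eqref{E-Dk-twisted-k} and \eqref{E-ineq-ktwist}, integrate over the compact leaf $F_1\times\{y\}$ so the Laplacian terms vanish, and conclude pointwise by continuity. The sign bookkeeping under the convention $\Delta f=-\Div(\nabla f)$ is handled exactly as in the paper.
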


\begin{proof}
Using the equality
 $\frac{\Delta^{(1)} u_i}{u_i} = \Delta^{(1)} \log u_i  - \|P_1\nabla\log u_i\|^2$
and \eqref{E-Dk-twisted-k}, we get
\[
 \Delta^{(1)} \sum\nolimits_{\,i=2}^k n_i\log u_i
 = {\rm S}_{\,\rm mix}({\cal D}_1,\ldots,{\cal D}_k) +\sum\nolimits_{\,i=2}^k  n_i\,\|P_1\nabla\log u_i\|^2 .
\]
Using \eqref{E-ineq-k} or \eqref{E-ineq-ktwist}, we obtain
\[
 \Delta^{(1)} \sum\nolimits_{\,i=2}^k n_i\log u_i \le \frac{k-1}{2\,k}\,\|\bar{H}\|^2
 +\sum\nolimits_{\,i=2}^k  n_i\,\|P_1\nabla\log u_i\|^2 +\bar\delta_{\rm mix}(n_1,\ldots,n_k) .
\]
Let $F_1$ be compact and $y\in M_2$. Integrating on a compact leaf $F_1\times \{y\}$ yields
\[
 \int_{F_1\times \{y\}} \Big(\frac{k-1}{2\,k}\,\|\bar{H}\|^2
 +\sum\nolimits_{\,i=2}^k  n_i\,\|P_1\nabla\log u_i\|^2 +\bar\delta_{\rm mix}(n_1,\ldots,n_k)\Big)\,d\,{\vol}_{\,F_1\times \{y\}}\ge0,
\]
hence, both claims follow.
\end{proof}

The following statement concerns the existence of compact leaves of an isometrically immersed twisted product.

\begin{corollary}
Let $f: F_1\times_{u_2} F_2 \to (\bar M,\bar g)$
be an isometric immersion of a twisted product in a Riemannian
manifold with $\bar\delta_{\rm mix}(n_1,n_2)<0$.
If $F_1$ is compact, then
\begin{equation}\label{E-C7}
 \frac14\,\|\bar{H}\|^2 \ge -\bar\delta_{\rm mix}(n_1,n_2)
\end{equation}
at some point of $F_1\times \{y\}$ for any $y\in F_2$.
Consequently, if the inequality
 $\frac14\,\|\bar{H}\|^2<-\bar\delta_{\rm mix}(n_1,n_2)$
is satisfied on $F_1\times \{y\}$ for some $y\in F_2$, then $F_1$ is non-compact.
\end{corollary}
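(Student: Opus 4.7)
The plan is to apply Theorem~\ref{T-k} pointwise at a maximum point of the warping function $u_2$ along the compact leaf $F_1\times\{y\}$. This maximum-principle trick gives a stronger conclusion than the integration argument used in the previous corollary, because the ``bad'' term $\|P_1\nabla\log u_2\|^2$ automatically vanishes at a critical point of $u_2(\cdot,y)$, so one can drop it from the final inequality.

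Fix $y\in F_2$ and regard $u_2(\cdot,y)$ as a smooth positive function on the compact manifold $F_1$; it attains its maximum at some $x_0\in F_1$. Since the twisted-product metric is $g=g_{F_1}\oplus u_2^2\,g_{F_2}$, the induced metric on the leaf $F_1\times\{y\}$ is precisely $g_{F_1}$, and so the leafwise Laplacian $\Delta^{(1)}$ at $(x_0,y)$ agrees with the Laplacian of $(F_1,g_{F_1})$ applied to $u_2(\cdot,y)$. Under the paper's sign convention $\Delta=-\Div\nabla$, the Hessian of a smooth function at a maximum being negative semidefinite yields $\Delta^{(1)}u_2\ge 0$ at $(x_0,y)$.

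Next, by the twisted-product formula ${\rm S}_{\,\rm mix}({\cal D}_1,{\cal D}_2)=n_2\,\Delta^{(1)}u_2/u_2$ (see \eqref{E-Dk-twisted-k} with $k=2$, or equivalently the example just before Theorem~\ref{C-004}), the positivity $u_2>0$ forces ${\rm S}_{\,\rm mix}({\cal D}_1,{\cal D}_2)\ge 0$ at $(x_0,y)$. Feeding this into \eqref{E-ineq-k} with $k=2$ at the same point gives
\[
 0\le {\rm S}_{\,\rm mix}({\cal D}_1,{\cal D}_2)\le \tfrac14\,\|\bar H\|^2+\bar\delta_{\rm mix}(n_1,n_2),
\]
which is exactly \eqref{E-C7} at $(x_0,y)\in F_1\times\{y\}$. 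The second claim is the immediate contrapositive: if $\tfrac14\|\bar H\|^2<-\bar\delta_{\rm mix}(n_1,n_2)$ everywhere on $F_1\times\{y\}$ for some $y$, then no such maximum point can exist, so $F_1$ cannot be compact.

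I expect no substantial obstacle. The only points requiring care are (i) the sign convention $\Delta=-\Div\nabla$, which the paper states explicitly, and (ii) the observation that the leaf $F_1\times\{y\}$ inherits the unwarped metric $g_{F_1}$ so that $\Delta^{(1)}$ really is the intrinsic Laplacian of $F_1$ along this slice. Neither is difficult, and both are already made transparent earlier in the paper.
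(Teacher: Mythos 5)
Your argument is correct, and it reaches the conclusion by a genuinely different final step than the paper. Both proofs share the same skeleton: combine the bound \eqref{E-ineq-k} from Theorem~\ref{T-k} (with $k=2$, so the coefficient is $\tfrac14$) with the twisted-product identity ${\rm S}_{\,\rm mix}({\cal D}_1,{\cal D}_2)=n_2\,\Delta^{(1)}u_2/u_2$, and then use compactness of the leaf $F_1\times\{y\}$ to force $\Delta^{(1)}u_2\ge0$ somewhere. The paper does this last step by integration: it multiplies through by $u_2>0$, integrates over the compact leaf, uses $\int_{F_1\times\{y\}}\Delta^{(1)}u_2\,d\vol=0$, and concludes that the nonnegative-integral condition forces $\tfrac14\|\bar H\|^2+\bar\delta_{\rm mix}(n_1,n_2)\ge0$ at some point. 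You instead evaluate pointwise at a maximum of $u_2(\cdot,y)$ on $F_1$, where the Hessian is negative semidefinite and hence, with the paper's convention $\Delta=-\Div\nabla$, $\Delta^{(1)}u_2\ge0$; your sign bookkeeping and the observation that the leaf carries the unwarped metric $g_{F_1}$ (the leaves are even totally geodesic) are both correct. Your version buys a little more --- it localizes the point where \eqref{E-C7} holds (a maximum of the twisting function on the leaf) --- while the paper's integration argument is the one that extends directly to the preceding multiply twisted corollary, where the extra term $\sum_i n_i\|P_1\nabla\log u_i\|^2$ does not vanish pointwise and must be handled under an integral sign. Either route is acceptable here.
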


\begin{proof}
Let $F_1$ be compact, then $F_1\times \{y\}$ is compact for any $y\in F_2$.
Denote by ${\cal D}_1$ and ${\cal D}_2$ the distributions on $F_1\times F_2$ tangent to the leaves and fibers.
From \eqref{E-ineq-k} and \eqref{E-ineq-comp2}, we~get
\[
 n_2\,\frac{\Delta^{(1)}\,u_2}{u_2} = {\rm S}_{\,\rm mix}({\cal D}_1,{\cal D}_2) \le \frac14\,\|\bar{H}\|^2 +\bar\delta_{\rm mix}(n_1,n_2).
\]
Using $\int_{F_1\times \{y\}} \Delta^{(1)}\,u_2\,d\,{\vol}_{F_1\times \{y\}}=0$ yields
\[
 \int_{F_1\times \{y\}} \Big(\frac14\,\|\bar{H}\|^2 +\bar\delta_{\rm mix}(n_1,n_2)\Big)u_2\,d\,{\vol}_{F_1\times \{y\}}\ge0,
\]
and the claims follow.
\end{proof}

Note that for $\bar\delta_{\rm mix}(n_1, n_2)\ge0$, the inequality \eqref{E-C7} is satisfied trivially.

\baselineskip=13.1pt

\end{document}